\definecolor{darkblue}{rgb}{0,0,0.3}
\definecolor{darkgreen}{rgb}{0,0.4,0}
\setlist[enumerate]{label={\upshape(\arabic*)},topsep=.7ex, leftmargin=*}
\setlist[itemize]{leftmargin=*}
\theoremstyle{plain}
\newtheorem{thm}{Theorem}[section]
\newtheorem{lemma}[thm]{Lemma}
\newtheorem{cor}[thm]{Corollary}
\newtheorem{prop}[thm]{Proposition}
\theoremstyle{definition}
\newtheorem{question}[thm]{Question}
\newtheorem{remark}[thm]{Remark}
\numberwithin{equation}{section}
\DeclareFontFamily{U}{russian}{}
\DeclareFontShape{U}{russian}{m}{n}
{ <5><6> wncyr5
	<7><8><9> wncyr7
	<10><10.95><12><14.4><17.28><20.74><24.88> wncyr10 }{}
\DeclareSymbolFont{Russian}{U}{russian}{m}{n}
\DeclareSymbolFontAlphabet{\mathcyr}{Russian}
\let\@math@cyr\mathcyr
\renewcommand{\mathcyr}[1]{\@math@cyr{\cyracc #1}}
\newcommand{\rank}{{\mathrm{rank}}}
\def\myrightarrow{{\setbox\z@\hbox{$\rightarrow$}\dimen0\ht\z@\multiply\dimen0 6\divide\dimen0 10\ht\z@\dimen0\box\z@}}
\def\myrightarrowfill@{\arrowfill@\relbar\relbar\myrightarrow}
\def\myleftarrow{{\setbox\z@\hbox{$\leftarrow$}\dimen0\ht\z@\multiply\dimen0 6\divide\dimen0 10\ht\z@\dimen0\box\z@}}
\def\myleftarrowfill@{\arrowfill@\myleftarrow\relbar\relbar}
\newcommand{\myxrightarrow}[2][]{\ext@arrow 0359\myrightarrowfill@{#1}{#2}}
\newcommand{\myxleftarrow}[2][]{\ext@arrow 3095\myleftarrowfill@{#1}{#2}}
\newcommand{\mtilde}{{\mathchoice
		{\widetilde{m}}
		{\widetilde{m}}
		{\rlap{$\scriptscriptstyle{m}$}\vphantom{\raise0pt\hbox{$m$}}\smash{\lower2.5pt\hbox{$\scriptscriptstyle\widetilde{\phantom{\scriptscriptstyle{m}}}$}}}
		{\rlap{$\scriptscriptstyle{m}$}\vphantom{\raise.2pt\hbox{$m$}}\smash{\lower2.05pt\hbox{$\scriptscriptstyle\widetilde{\phantom{\scriptscriptstyle{m}}}$}}}}}
\newcommand{\Mtilde}{{\mathchoice
		{\rlap{$M$}\mkern1mu\smash[b]{\lower.5pt\hbox{$\widetilde{\phantom{M}}$}}\mkern-1mu}
		{\rlap{$M$}\mkern1mu\smash[b]{\lower.5pt\hbox{$\widetilde{\phantom{M}}$}}\mkern-1mu}
		{\rlap{$\scriptstyle{M}$}\mkern1mu\smash[b]{\lower.5pt\hbox{$\widetilde{\phantom{\scriptstyle{M}}}$}}\mkern-1mu}
		{\widetilde{M}}}}
\newcommand{\etabar}{{\bar\eta}}
\newcommand{\et}{{\text{ét}}}
\newcommand{\A}{{\mathbf A}}
\renewcommand{\C}{{\mathbf C}}}%
\newcommand{\C}{{\mathbf C}}}
\newcommand{\cF}{\mathrm F}
\newcommand{\cFplus}{\mathrm F_+}
\newcommand{\cFconst}{\mathrm F_\const}
\newcommand{\const}{\mathrm{const}}
\newcommand{\Pic}{\mathrm{Pic}}
\newcommand{\Br}{\mathrm{Br}}
\renewcommand{\phi}{\varphi}
\renewcommand{\emptyset}{\varnothing}
\newcommand{\Coker}{{\mathrm{Coker}}}
\newcommand{\Hom}{{\mathrm{Hom}}}
\newcommand{\chapeau}{{\rlap{\smash{\hbox{\lower4pt\hbox{\hskip1pt$\widehat{\phantom{u}}$}}}}}}
\newcommand{\Picplushat}{\Pic_+^{{\smash{\hbox{\lower4pt\hbox{\hskip0.4pt$\widehat{\phantom{u}}$}}}}}}
\newcommand{\PicplusAhat}{\Pic_{+,\A}^{{\smash{\hbox{\lower4pt\hbox{\hskip.4pt$\widehat{\phantom{u}}$}}}}}}
\newcommand{\Pichat}{\Pic^{{\smash{\hbox{\lower4pt\hbox{\hskip0.4pt$\widehat{\phantom{u}}$}}}}}}
\DeclareMathOperator{\Ker}{Ker}
\DeclareMathOperator{\inv}{inv}
\renewcommand{\G}{{\mathcal{G}}}}%
\newcommand{\G}{{\mathcal{G}}}}
\renewcommand{\tocsection}[3]{%
	\indentlabel{\@ifnotempty{#2}{\bfseries\ignorespaces#1 #2\quad}}\bfseries#3}
\renewcommand{\tocsubsection}[3]{%
	\indentlabel{\@ifnotempty{#2}{\hspace{1.6em}\ignorespaces#1 #2\quad}}#3}
\let\@wraptoccontribs\wraptoccontribs\makeatother
\newcommand\kbar{{\overline{k}}}
\newcommand\ZZ{\mathbb{Z}}
\newcommand\QQ{\mathbb{Q}}
\newcommand\GG{\mathbb{G}}
\newcommand\sm{\mathrm{sm}}
\DeclareMathOperator\ord{ord}
\newtheorem*{ack}{Acknowledgements}
\newtheorem*{terminology}{Terminology}
\begin{document}

\title[strong approximation for the intersection of  two quadrics]
{strong approximation for the intersection of  two quadrics}

\author{Dasheng Wei}

\address{Academy of Mathematics and System Science, CAS, Beijing 100190,
  P.\ R.\ China \emph{and} School of mathematical Sciences, University of  CAS, Beijing
  100049, P.\ R.\ China}

\email{dshwei@amss.ac.cn}

\author{Jie Xu}

\address{Academy of Mathematics and System Science, CAS, Beijing 100190,
	P.\ R.\ China}

\email{xujie2020@amss.ac.cn}

\author{Yi Zhu}

\address{Bethesda, MD, USA 20817}

\email{math.zhu@gmail.com}

\date{December 12, 2024}

\begin{abstract}
	

 We study strong approximation for the intersection of two affine quadrics. As its application, we prove the fibration method for weak approximation over number fields of rank four with nonsplit fibers split by quadratic extensions.
\end{abstract}

\subjclass[2010]{14G05 (11D57, 14F22)}

\maketitle

\section{Introduction}




The paper is mainly devoted to the study of the following  fibration method for weak approximation over number fields.

\begin{question}\label{question:fibration}
Let $X$ be a smooth proper variety over a number field $k$
and let $f: X \to \mathbb{P}^1_k$ be a dominant morphism whose geometric generic fiber is rationally connected.
Assume that $X_c(k)$ is dense in $X_c(\A_k)^{\Br(X_c)}$ for all but finitely many $c \in \mathbb{P}^1(k)$, where
$X_c = f^{-1}(c)$. Does it follow that $X(k)$ is dense in $X(\A_k)^{\Br(X)}$?
\end{question}

Question \ref{question:fibration} has been extensively studied. One main approach
consists in applying the theory of descent developed by
Colliot-Th\'el\`ene and Sansuc~\cite{CTS87} to reduce the problem to certain
torsors associated with the vertical Brauer group of $X$ relative to~$\mathbb P^1_k$.
This approach has been applied successfully in many
cases, including Ch\^atelet surfaces~\cite{CTSSD1}, some conic and
quadric bundles~\cite{bms}, and various toric
bundles \cite{heathbrownskorobogatov,CTSa,derenthalsmeetswei, browningmatthiesen}.

Another approach, known as the fibration method, was initiated by
Harari~\cite{Har94} and has been further developed by
Wittenberg and Harpaz~\cite{HW}. The fibration method has been successfully applied to various families of rationally connected varieties, including conic bundles~\cite{CT90,HW} and some quadric bundles~\cite{BS19,HWW22}.

The known cases for fibration method are mostly for fibrations of low ranks. Recall here that the \emph{rank} of a fibration $f:X\to \mathbb{P}_k^1$ is defined to be 
the sum of degrees of  closed points of $\mathbb{P}_k^1$ above which the fiber of $f$ is not split.
The definition of split fibers can be found in work of  Skorobogatov \cite{Sko96}, where the notion was originally introduced to the subject.

\begin{thm}[\cite{Har94,HW,BS19,HWW22}]
	Question~\ref{question:fibration} has a positive answer for each of the following cases :
	\begin{enumerate}[label={\upshape(\roman*)}]
		\item the rank of $f$ is at most $2$;
		\item the rank of $f$ is $3$ and every fiber~$X_m$ is split by a quadratic extension of~$k(m)$;
		\item the rank of $f$ is $3$, one fiber~$X_m$ lies above a rational point of~$\mathbb P^1_k$
		and every remaining fiber~$X_m$  is split by a quadratic extension of~$k(m)$.
	\end{enumerate}	
\end{thm}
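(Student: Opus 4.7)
Each item is the content of one of the cited works, but the strategy is uniform: the fibration method. Starting from an adelic point $(P_v)_v \in X(\A_k)^{\Br(X)}$ and a finite set $S$ of places at which approximation is required, the goal is to produce a rational point $c \in \mathbb{P}^1(k)$ such that $X_c$ is smooth, $(P_v)_v$ lifts to an adelic point $(Q_v)_v \in X_c(\A_k)$ still approximating $(P_v)_v$ at $v \in S$, and $(Q_v)_v$ is orthogonal to $\Br(X_c)$. The hypothesis applied to $X_c$ then produces a rational point of $X_c \subset X$ approximating $(P_v)_v$.

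The crux is the last orthogonality condition. Brauer--Manin orthogonality of $(P_v)_v$ against the vertical Brauer group $\Br_{\mathrm{vert}}(X/\mathbb{P}^1_k) \subset \Br(X)$ translates into reciprocity conditions on the values $f(P_v) \in \mathbb{P}^1(k_v)$ supported at the non-split closed points of $\mathbb{P}^1_k$. The task is then to extract a rational $c$ satisfying the same reciprocity conditions at all places simultaneously, via Harari's formal lemma combined with an arithmetic duality on algebraic groups attached to the non-split fibers.

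In case (i), after a coordinate change placing the (at most two) non-split closed points at $0$ and $\infty$, the necessary duality reduces to strong approximation on $\Gm$; this is Harari's original theorem \cite{Har94} (and its reformulation in \cite{HW}). In cases (ii) and (iii), the hypothesis that each non-split fiber $X_m$ is split by a quadratic extension $L_m/k(m)$ lets one describe $\Br_{\mathrm{vert}}(X/\mathbb{P}^1_k)/\Br(k)$ explicitly via corestrictions of quaternion classes. The existence of $c$ then follows from the Harpaz--Wittenberg refinement of the formal lemma \cite{HW}, combined with Poitou--Tate-style arithmetic duality applied to the norm-one tori $R_{L_m/k}^{(1)}(\Gm)$. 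In case (iii), placing the rational non-split fiber at $\infty$ provides an extra degree of freedom at infinity, which weakens the required duality input and is the mechanism exploited in \cite{BS19,HWW22}.

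The main obstacle is precisely this rationality-of-$c$ step. One must balance the freedom in approximating $f(P_v)$ by a rational point of $\mathbb{P}^1_k$ against the number of simultaneous reciprocity constraints, which grows with the rank of the fibration and the arithmetic of the residue fields at non-split points. Rank $3$ with quadratic splittings lies at the frontier of what arithmetic duality on norm-one tori of quadratic extensions can deliver through the formal lemma; pushing beyond requires new strong-approximation inputs, which is exactly the route taken by the present paper via the intersection of two affine quadrics.
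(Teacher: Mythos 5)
This theorem is imported verbatim from the cited literature; the paper gives no proof of it, so there is no in-paper argument to measure your sketch against. Judged as an account of the cited proofs, your overall architecture is accurate: reduce to orthogonality against the vertical Brauer group via Harari's formal lemma, translate that into reciprocity conditions on the values $f(P_v)$ supported at the non-split closed points, and then produce a rational $c$ satisfying those conditions so that the hypothesis on $X_c$ can be invoked. Case (i) is also correctly attributed.

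There is, however, a genuine gap in the mechanism you propose for cases (ii) and (iii). Poitou--Tate duality for the norm-one tori $R^{1}_{L_m/k(m)}(\Gm)$ governs the arithmetic of those tori in isolation, but the step that actually has to be carried out is to find a single $c\in\mathbb P^1(k)$ such that, simultaneously for all non-split points $m$, the value at $c$ of the polynomial defining $m$ is a norm from $L_m$ at the relevant places and the fiber acquires local points everywhere else; this is a strong approximation statement for the auxiliary descent varieties cut out by $N_{L_m/k(m)}(\mathbf z_m)=\lambda\,P_m(t)$ (Conjecture~$\cFplus$ in the language of Harpaz--Wittenberg and \cite{HWW22}), not a consequence of duality for the tori alone. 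The unconditional inputs in the cited works are strong approximation for these auxiliary varieties when the total degree is at most $2$ (case (i)), and, for total degree $3$ with quadratic splitting fields, the descent results of Colliot-Th\'el\`ene--Sansuc--Swinnerton-Dyer on Ch\^atelet-type varieties and intersections of two quadrics (\cite[Corollary 6.2]{HWW22}) --- precisely the ingredient the present paper extends to rank $4$. Your sketch also omits the treatment of the non-vertical part of $\Br(X_c)$: one must apply the formal lemma to a finite set of classes generating $\Br(X_\eta)/\Br(k(\eta))$ and control their specializations at $c$, which is where the rational connectedness of the geometric generic fiber enters.
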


Recently, Harpaz, Wittenberg and the first author \cite{HWW22} proved that Question~\ref{question:fibration} has a positive answer when all non-split fibers are split by cyclic extensions of the base fields of the fibers under Schinzel's hypothesis. For unconditional results, the strongest result is due to Browning and Matthiesen \cite{browningmatthiesen}. 

\begin{thm}[\cite{HW,browningmatthiesen}]
	\label{thm:bm}
	Question~\ref{question:fibration} has a positive answer when the base field is $\mathbb{Q}$ and all non-split fibers lie over rational points.
\end{thm}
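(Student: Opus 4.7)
The plan is to combine the Harari--Wittenberg fibration method \cite{HW} with the additive-combinatorial input of Browning--Matthiesen \cite{browningmatthiesen}. After a linear change of coordinates on $\mathbb P^1_\Q$, I would arrange that the finitely many non-split points $a_1,\ldots,a_r$ all lie in $\A^1(\Q)$ and that the fiber $X_\infty$ is smooth and split. Fix an adelic point $(P_v)\in X(\A_\Q)^{\Br(X)}$; the goal is to approximate it by rational points.

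\textbf{Step 1: vertical reduction.} Using the vertical Brauer group $\Br_\vert(X/\mathbb P^1)$ and Harari's formal lemma, I would reduce to the problem of producing a rational value $t\in\Q$ that approximates $f(P_v)$ at a prescribed finite set of places $S$ (including the archimedean place and places of bad reduction) and such that the smooth fiber $X_t$ carries an adelic point orthogonal to $\Br(X_t)$. Since each non-split fiber $X_{a_i}$ is split by some finite extension of~$\Q$, the elements of $\Br_\vert(X/\mathbb P^1)$ can be described, modulo constants, in terms of corestrictions of symbols attached to the linear forms $L_i(t)=t-a_i$ and to the splitting characters of the $X_{a_i}$. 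In particular, the Brauer--Manin conditions translate into reciprocity conditions on the primes dividing the various $L_i(t)$.

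\textbf{Step 2: additive combinatorics.} The heart of the argument is to produce $t\in\Z$ satisfying the prescribed congruence/archimedean conditions at~$S$ such that, for each $i$, the integer $L_i(t)$ equals a bounded unit times a prime lying in a prescribed union of arithmetic progressions (chosen so that the splitting behavior of this prime in $\Q(a_i)$ and in the splitting field of $X_{a_i}$ matches what Step~1 requires). This is precisely the regime covered by the Green--Tao--Ziegler theorem on prime values of systems of linear forms, as exploited by Browning--Matthiesen. Once such $t$ is found, $X_t$ has local points everywhere, and the hypothesis of Question~\ref{question:fibration} supplies rational points on $X_t$ approximating $(P_v)$ at~$S$; these are then the desired rational points on~$X$.

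\textbf{Main obstacle.} The most delicate step is the interface between Step~1 and Step~2: one must check that the local conditions imposed by the Brauer--Manin pairing against $\Br_\vert(X/\mathbb P^1)$ can all be encoded as congruence conditions on the large prime factors of the $L_i(t)$, so that the Green--Tao input actually applies. This requires a careful Poitou--Tate style global duality computation, together with the observation (specific to the base field $\Q$) that the $a_i$ are rational, so that $L_i(t)$ is an integer linear form in~$t$, which is what makes the additive-combinatorial machinery directly applicable. Without this rationality of the $a_i$, the analogous problem over number fields remains open.
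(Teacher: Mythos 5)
The paper does not actually prove Theorem~\ref{thm:bm}: it quotes it from the cited works of Harpaz--Wittenberg and Browning--Matthiesen, and your two-step architecture (reduction via the vertical Brauer group and Harari's formal lemma to producing a good value $t\in\Q$, followed by an additive-combinatorial construction of such a $t$) is indeed the architecture of that proof. The gap is in Step~2 as you formulate it. You propose to choose $t$ so that each $L_i(t)=t-a_i$ is an $S$-unit times a prime $p_i$ lying in a prescribed union of arithmetic progressions, the progressions being chosen so that the splitting behaviour of $p_i$ in the splitting field $E_i$ of $X_{a_i}$ is as required. But the theorem places no restriction on the $E_i$: they are arbitrary finite extensions of~$\Q$. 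By class field theory, a splitting condition in $E_i$ (for instance ``$E_i$ has a place of residue degree $1$ over $p_i$'', which is what is needed for $X_t$ to be split at $p_i$ via Lang--Weil and Hensel) can be encoded by congruence conditions on $p_i$ only when the relevant extension is abelian over~$\Q$; for non-abelian $E_i$ it is a Chebotarev condition on a union of conjugacy classes that is not cut out by arithmetic progressions, so the Green--Tao--Ziegler theorem on linear forms taking prime values in progressions cannot be invoked in the way you describe.

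This is exactly why the unconditional result requires the full strength of Browning--Matthiesen rather than the prime-producing form of Green--Tao--Ziegler: instead of forcing $L_i(t)$ to be prime in a progression, one imposes the auxiliary equations $N_{E_i/\Q}(\mathbf{x}_i)=c_i\,L_i(t)$ and proves the Hasse principle and weak approximation for the resulting norm-form variety; after descent and a fibration into linear subspaces this becomes a system of linear forms in many variables to which the Green--Tao--Ziegler machinery does apply, with no abelianness restriction on the $E_i$. As written, your argument establishes the theorem only under the extra hypothesis that every non-split fiber is split by an abelian extension of~$\Q$ --- which happens to cover the use made of this circle of ideas in the present paper, where all splitting fields are quadratic, but not the statement of Theorem~\ref{thm:bm} itself. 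To close the gap you should replace ``prime in prescribed arithmetic progressions'' by the norm-form input of Browning--Matthiesen (equivalently, verify Conjecture~9.1 of Harpaz--Wittenberg over $\Q$ with rational $a_i$ and arbitrary finite extensions $L_i$).
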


In this paper, we investigate the rank four case of Question~\ref{question:fibration}. This can be viewed as a generalization of Colliot-Th\'el\`ene, Sansuc and Swinnerton-Dyer's result on Ch\^atelet surfaces \cite{CTSSD1} and Colliot-Th\'el\`ene's result on conic bundles of rank four \cite{CT90}.

\begin{thm}\label{cor:main-rank}
	Question~\ref{question:fibration} has a positive answer when
	$\rank(f)=4$ and  all fibers split by a quadratic extension.
\end{thm}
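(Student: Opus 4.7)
The strategy is to apply the general fibration machinery of Harpaz--Wittenberg (as developed in \cite{HW,HWW22}) to reduce weak approximation for $X$ to an arithmetic statement about an auxiliary variety $W$ built from the splitting data of the non-split fibers of~$f$. Since $\rank(f)=4$ and every non-split fiber splits in a quadratic extension, the key point is that this auxiliary $W$ admits a model that is the intersection of two affine quadrics over~$k$, which is precisely the setting of the strong approximation theorem established earlier in this paper.

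In more detail, let $M\subset\mathbb{P}^1_k$ be the finite set of closed points over which $f$ has a non-split fiber, and for each $m\in M$ fix a quadratic extension $K_m/k(m)$ splitting~$X_m$. Starting from an adelic point $(x_v)\in X(\A_k)^{\Br(X)}$ and following the standard reduction (cf.\ \cite{HW,HWW22,BS19}), it suffices to produce a rational point $c\in\mathbb{P}^1(k)$ such that each fiber $X_c$ is everywhere locally solvable and sits arbitrarily close to $(x_v)$ at a prescribed finite set of places. By class field theory, the quadratic-splitting hypothesis converts this into the solvability of a system of norm equations $N_{K_m/k(m)}(\mathbf{y}_m)=P_m(c)$ for $m\in M$. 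Since $\rank(f)=4$, grouping these norm equations (weighted by residue degrees) into two families of total $k$-degree $2$ and taking the corresponding Weil restrictions exhibits the relevant auxiliary variety $W$ as the intersection of two affine quadrics in some affine space.

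At this point the strong approximation theorem for intersections of two affine quadrics applies directly to $W$, producing a rational point on $W$ approximating the given local data, which translates into the desired $c\in\mathbb{P}^1(k)$. Combined with the assumed weak approximation on $X_c$, together with rational connectedness of the generic fiber to spread the approximation, this yields a rational point on $X$ close to $(x_v)$.

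The main obstacle will be the Brauer--Manin bookkeeping under this reduction: one must check that the Brauer group elements on $W$ that control its strong approximation are pulled back from $\Br(X)$, so that $(x_v)\perp\Br(X)$ gives exactly the Brauer--Manin hypothesis required to invoke the strong approximation theorem on $W$. A secondary difficulty is to carry out the grouping of the four non-split contributions uniformly across all possible Galois types of $M$ (namely the partitions $1+1+1+1$, $1+1+2$, $1+3$, $2+2$, $4$ of the total degree), and to choose the auxiliary polynomials $P_m$ so that the resulting $W$ is smooth and geometrically integral; this is where the rank-$4$ hypothesis combined with quadratic splitting is used essentially and where the subtlety is concentrated.
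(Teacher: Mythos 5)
Your overall strategy is the same as the paper's: Theorem~\ref{cor:main-rank} is obtained by feeding the auxiliary norm-equation varieties produced by \cite[Cor.~4.7]{HWW22} into the strong approximation theorem for intersections of two affine quadrics, namely case (iii) of Theorem~\ref{thm:main'}. However, the step where you actually produce \emph{two} quadrics is wrong as stated. A ``grouping of the norm equations into two families of total $k$-degree $2$'' is modelled on the Ch\^atelet case and cannot even be formulated for the splitting types $1+3$ or $4$; moreover, even for $1+1+1+1$ or $2+2$ it does not yield two quadrics unless all residue fields equal $k$. The correct mechanism is linear elimination: the variety attached to the splitting data is $N_{K_m/k(m)}(\mathbf{x}_m)=\lambda-a_m\mu$ for $m\in M$, which after Weil restriction is a system of $\sum_m[k(m):k]=4$ quadratic equations in the $8$ coordinates of $(\mathbf{x}_m)_m$ together with $(\lambda,\mu)$. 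Since the right-hand sides range over the $2$-dimensional image of $(\lambda,\mu)\mapsto(\lambda-a_m\mu)_m$ inside the $4$-dimensional algebra $\prod_m k(m)$, eliminating $(\lambda,\mu)$ leaves exactly $4-2=2$ homogeneous quadratic forms in $8$ variables, i.e.\ the punctured affine cone over an intersection of two quadrics in $\mathbb{P}^7$, uniformly in the Galois type of $M$. This arithmetic $4-2=2$ is precisely where the rank-$4$ hypothesis enters; your proposal identifies the right target but not the reason it is reached.

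Secondly, the ``main obstacle'' you single out, matching Brauer--Manin conditions on the auxiliary variety with $\Br(X)$, is vacuous here: because $n=7$, Theorem~\ref{thm:main'}(iii) gives strong approximation off $v_0$ unconditionally, with no Brauer--Manin obstruction and with no a priori rational point (local solvability suffices by \cite{CTSSD1}, \cite{HB18}, \cite{Mol}). What does require verification, and what your proposal omits, is that the resulting pencil of two quadrics satisfies the geometric hypotheses of Theorem~\ref{thm:main'} (pure, geometrically integral, not a cone, only finitely many singular points) and that the open conditions imposed by the fibration method (e.g.\ $(\lambda,\mu)\neq(0,0)$) only delete a closed subset of codimension at least $2$, so as to fall under the allowance for $Z$; these checks use that the $a_m$ are pairwise distinct and that each $K_m$ is a genuine quadratic field extension of $k(m)$.
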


By Theorem ~\ref{cor:main-rank}, we have the following example for quadric surface bundles with at most four nonsplit fibers split by quadratic extensions. 

\begin{cor}\label{exa:sko}
	Let $X/\mathbb P^1_k$ be the projective quadric surface bundle given by $$x^2-a(t)y^2+P(t)(z^2-b(t)w^2)=0$$
	where $a(t),b(t)$ and $P(t)$ are nonzero square-free polynomials in $k[t]$ of even degree. Let $\gcd(a(t), b(t)P(t))=1 .$ Suppose that the degree of $P(t)$ is at most $4$, then $X(k)$ is dense in $X(\A_k)^{\Br(X)}$.
\end{cor}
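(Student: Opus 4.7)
The plan is to apply Theorem~\ref{cor:main-rank} to a smooth proper model $f\colon X \to \mathbb{P}^1_k$ of the given quadric bundle, after verifying that the rank of $f$ is at most four and that every non-split fiber is split by a quadratic extension.

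I first enumerate the non-split fibers by a case analysis over closed points $m \in \mathbb{P}^1_k$. When $P(m) \neq 0$, the fiber is the quadric of diagonal form $\mathrm{diag}(1,\,-a(m),\,P(m),\,-P(m)b(m))$; the hypothesis $\gcd(a, bP) = 1$ ensures that $a(m)$ does not vanish simultaneously with $b(m)$ or $P(m)$, so this form has rank at least three. If the rank is four, the fiber is a smooth quadric surface, hence split; if exactly one of $a(m), b(m)$ vanishes, the fiber is a cone over a smooth conic, still geometrically integral and split. If instead $P(m) = 0$, then $a(m) \neq 0$ by coprimality and the equation reduces to $x^2 - a(m) y^2 = 0$, a union of two planes in $\mathbb{P}^3_{k(m)}$. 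This fiber is split if and only if $a(m) \in k(m)^{*2}$; otherwise the two planes are conjugate over the quadratic extension $k(m)(\sqrt{a(m)})/k(m)$, which splits the fiber.

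Since $a, b$ and $P$ all have even degree, one may choose a smooth proper model whose fiber over $\infty$ is the smooth quadric cut out by the leading coefficients of $a, b, P$, which is split and contributes nothing to the rank. The non-split fibers are therefore precisely those over closed points $m \in \mathbb{A}^1_k$ with $P(m) = 0$ and $a(m) \notin k(m)^{*2}$, each split by a quadratic extension of $k(m)$. Their degrees sum to at most $\deg(P) \leq 4$, so $\rank(f) \leq 4$. The generic-fiber hypothesis of Question~\ref{question:fibration} holds automatically: smooth projective quadric surfaces over number fields satisfy the Hasse principle and weak approximation, with no non-trivial Brauer--Manin obstruction. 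The conclusion then follows from Theorem~\ref{cor:main-rank} when $\rank(f) = 4$, and from the earlier rank $\leq 3$ results otherwise.

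The main technical obstacle is carrying out this analysis on a genuinely smooth proper model rather than on the naive (singular) total space cut out by the displayed equation. The singularities of the naive model are isolated $A_1$-type nodes lying over the zeros of $P$, located at the points $[0{:}0{:}z_0{:}w_0]$ with $z_0^2 = b(m) w_0^2$. One must verify that a standard resolution of these singularities does not introduce a new non-split fiber nor split an existing one; this is routine because the exceptional divisors over such nodes have residue fields already contained in $k(m)(\sqrt{a(m)})$, so the splitting field of each fiber is unchanged by the resolution.
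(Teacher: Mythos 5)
Your proposal is correct and follows the same route as the paper, which obtains the corollary directly from Theorem~\ref{cor:main-rank} by observing that on a smooth proper model the only possibly non-split fibers lie over the zeros of $P$, each split by $k(m)(\sqrt{a(m)})$, so that the rank is at most $\deg P\le 4$ (with the rank $\le 3$ results covering the degenerate cases). The one inaccuracy is in your last paragraph: the nodes of the naive model sit at $z^2=b(m)w^2$, so their residue fields (and those of the exceptional divisors) involve $\sqrt{b(m)}$ rather than $\sqrt{a(m)}$ --- but this is harmless, since the strict transform of either plane $x=\pm\sqrt{a(m)}\,y$ is already a multiplicity-one component that becomes geometrically irreducible over $k(m)(\sqrt{a(m)})$, which is all that splitness by a quadratic extension requires.
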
 

To prove Theorem ~\ref{cor:main-rank}, we follow the fibration method developed by Harpaz, Wittenberg and the first author of the paper \cite[Cor. 4.7]{HWW22}. The rank four case reduces to study strong approximation for the intersection of two affine quadrics, which is the main effort of this paper. In fact, its proof immediately follows case (iii) in Theorem \ref{thm:main'}.

\begin{thm}\label{thm:main'}
	Let $k$ be a number field and $v_0$ a place of $k$.  Let $V\subset \mathbb P^n_k, n\geq 5$ be the pure geometrically integral intersection of two quadratics which is not a cone. Let $\widetilde V$ be the affine cone of $V$ and $Z$ a closed subset of $\widetilde V$ of codimension at least $2$. Let $\widetilde V^{\sm}$ be the smooth locus of $\widetilde V$ and  $\widetilde U=\widetilde V^{\sm}\setminus Z$. 
	
	Suppose that $V$  has only finitely many singular points. Then: 
	\begin{enumerate}[label=(\roman*)]
		\item Suppose $V^\sm(k)\neq \emptyset$ and $n=5$, then $\widetilde U$ satisfies strong approximation with Brauer-Manin obstruction off $v_0$.
		\item Suppose $V^\sm(k)\neq \emptyset$ and $n= 6$, then $\widetilde U$ satisfies strong approximation off $v_0$.
		\item  Suppose $n\geq 7$, then $\widetilde U$ satisfies strong approximation off $v_0$.
	\end{enumerate}
\end{thm}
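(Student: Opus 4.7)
The plan is to combine a fibration method for strong approximation with the Colliot-Th\'el\`ene--Xu theorem on strong approximation with Brauer--Manin obstruction for affine quadrics.

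First, since $Z$ has codimension at least $2$ inside the smooth variety $\widetilde V^{\sm}$, strong approximation transfers between $\widetilde U$ and $\widetilde V^{\sm}$ by a standard purity argument, so I may replace $\widetilde U$ by $\widetilde V^{\sm}$ throughout. In cases (i) and (ii) the assumed smooth $k$-point of $V$ scales to a smooth $k$-point of $\widetilde V$; in case (iii) the hypothesis $n\geq 7$ supplies rational points of $V^{\sm}$ via known local-global principles for smooth intersections of two quadrics of high dimension, so that the conclusion of the theorem is not vacuous.

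The key step is to exhibit $\widetilde V$ (or a suitable open subset) as fibered by affine quadrics. The natural source is the pencil $\{\lambda q_1+\mu q_2\}_{[\lambda:\mu]\in\mathbb P^1_k}$ of quadrics through $V$: by the finite-singular-points hypothesis on $V$ the discriminant of the pencil is a nonzero binary form, so all but finitely many members of the pencil are smooth quadrics. I would use this pencil together with projection from the line through the origin and a chosen smooth $k$-rational point of $\widetilde V$ to present $\widetilde V$ as birational to the total space of an affine quadric fibration over a rational base. The generic fiber would then be an affine quadric of dimension at least $n-2\geq 3$ in cases (i) and (ii), and at least $5$ in case (iii), which is precisely the regime where strong approximation with Brauer--Manin obstruction for smooth affine quadrics is known by Colliot-Th\'el\`ene and Xu. Applying a fibration method for strong approximation in the spirit of Cao-Liang-Xu or Cao-Xu, with the fibrewise strong approximation supplied by Colliot-Th\'el\`ene--Xu, would yield strong approximation with Brauer--Manin obstruction for $\widetilde V^{\sm}$ off $v_0$, which already proves case (i).

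For cases (ii) and (iii) it remains to see that the Brauer--Manin obstruction is trivial. This reduces to a computation of $\Br(\widetilde V^{\sm})/\Br(k)$: for $n\geq 6$ the Brauer group of a smooth intersection of two quadrics in $\mathbb P^n_k$ is trivial modulo constants, and this triviality should transfer to the cone via the $\Gm$-torsor structure $\widetilde V^{\sm}\setminus\{0\}\to V^{\sm}$ and a routine Hochschild--Serre computation, after controlling the contribution of the vertex and of the cones over the singular points of $V$. The hardest part of the whole argument is the construction of the affine quadric fibration in the previous paragraph with good enough control: the generic fiber must be smooth with a rational section, the Brauer--Manin pairing on the total space must be compatible with that on the fibres for the fibration method to apply, and the finitely many singular members of the pencil must be shown to contribute only a negligible bad locus which can be avoided by adelic perturbation away from $v_0$. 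In case (i) this Brauer--Manin bookkeeping is especially delicate, as the surviving nontrivial classes are expected to come from the $2$-torsion in the Jacobian of the hyperelliptic discriminant curve of the pencil, and these must be tracked precisely through the fibration argument.
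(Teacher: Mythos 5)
Your opening reduction is where the argument breaks down. Replacing $\widetilde U=\widetilde V^{\sm}\setminus Z$ by $\widetilde V^{\sm}$ is not a ``standard purity argument'': strong approximation for a variety $X$ does \emph{not} formally imply strong approximation for $X\setminus Z$ when $Z$ has codimension $\geq 2$, because an adelic point of $X\setminus Z$ carries integrality conditions (its reduction must avoid $Z$ modulo almost every place) that a rational point of $X$ approximating it inside $X(\mathbb A_k^S)$ need not satisfy. This ``arithmetic purity'' of strong approximation is a known open problem in general, and is nontrivial already for $\mathbb A^2_k\setminus\{0\}$; it is exactly why the paper states Proposition~\ref{prop:genral} and Theorem~\ref{thm:main} with an arbitrary codimension-$2$ closed subset $Z$ removed and then carries $Z$ through every step of the induction (choosing the point $p$ so that $T_p\cap Z$ has positive codimension in $Z$ via Lemma~\ref{lem:point}, checking that the preimage $Z'$ under the blow-up still has codimension $\geq 2$, etc.). Deducing the statement for $\widetilde U$ from the statement for $\widetilde V^{\sm}$ is the hard direction, and nothing in your outline addresses it.

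The second gap is the fibration itself. The paper does not fiber $\widetilde V$ into affine quadrics for the approximation step. It chooses a smooth rational point $p$ whose residual intersection $C(p)$ is a smooth complete intersection, writes $Q_1=x_0x_1+Q_1'$, $Q_2=x_0x_2+Q_2'$, blows up the cone over $C(p)$ (the locus $x_1=x_2=0$), and fibers over $\mathbb P^1$ with fibers that are again intersections of two quadrics in one variable fewer; the engine is the fibration theorem \cite[Corollary 4.7]{HWW22}, which is built to tolerate the finitely many non-split fibers (split by quadratic extensions) that necessarily occur when $n=5$, and the induction terminates at $n=4$ with punctured affine cones over possibly singular del~Pezzo surfaces of degree $4$, handled by explicit birational geometry (Theorem~\ref{thm:SA-general-type}). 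Your proposed affine-quadric fibration ``birational to'' $\widetilde V$ is not a usable object, since strong approximation is destroyed by birational modification in codimension $1$; and even granting a genuine fibration, the two tools you lean on do not apply as stated: fibration theorems for strong approximation of Cao--Liang--Xu type require all fibers over the relevant base to be split (or the base and fibers to be very special), which fails at the degenerate members of the pencil, while Colliot-Th\'el\`ene--Xu's fiberwise strong approximation for affine quadrics needs a non-compactness (isotropy) hypothesis at $v_0$ on each fiber, which can fail for infinitely many fibers when $v_0$ is a real place, no matter how large $n$ is. The parts of your sketch concerning the Brauer group for $n\geq 6$ and the source of rational points for $n\geq 7$ are consistent with the paper, but the two central mechanisms are missing.
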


In particular, we have the following corollary for nonsingular intersection of two affine quadrics.
\begin{cor}
 Let $v_0$ be a place of $k$.  Let $\widetilde V\subset \mathbb A^{n+1}_k\setminus{(0,\cdots,0)}$ be the  variety defined by $$Q_1(x_0,\cdots,x_n)=Q_2(x_0,\cdots,x_n)=0,$$ where $Q_1$ and $Q_2$ are quadratic forms. Let  $Z$ be a closed subset of $\widetilde V$ of codimension at least $2$. If $n\geq 7$ and $\widetilde V$ is nonsingular, then $\widetilde V\setminus Z$ satisfies strong approximation off $v_0$.
\end{cor}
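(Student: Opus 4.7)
The plan is to deduce this corollary directly from Theorem~\ref{thm:main'}(iii) by verifying its projective hypotheses. Let $V \subset \mathbb P^n_k$ be the closed subscheme cut out by the homogeneous equations $Q_1 = Q_2 = 0$, and let $W$ denote the full affine cone over $V$ in $\mathbb A^{n+1}_k$, so that $\widetilde V = W \setminus \{0\}$. Since $\widetilde V$ is nonsingular and the $\Gm$-action on it is free, the quotient $V$ is a smooth projective variety of dimension $n - 2 \geq 5$.

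Smoothness of $V$ yields all the required projective hypotheses of Theorem~\ref{thm:main'}. First, $V$ cannot be a cone, since the apex of a cone is singular and would lie on $V$. Second, $V$ is geometrically integral (hence pure): a smooth complete intersection of positive dimension in projective space is geometrically connected, via the standard vanishing $H^0(V_{\bar k}, \O_{V_{\bar k}}) = \bar k$, and a smooth connected scheme is integral. Third, $V$ has no singular points at all, so in particular only finitely many.

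It remains to treat the closed set $Z$. Smoothness of $V$ implies that the only possible singular point of the affine cone $W$ is the origin, hence $W^{\sm} = \widetilde V$. Let $\bar Z$ denote the closure of $Z$ in $W$; since $\bar Z \setminus Z \subset \{0\}$ and $\{0\}$ has codimension $n - 1 \geq 6$ in $W$, the subset $\bar Z$ remains of codimension at least $2$. Theorem~\ref{thm:main'}(iii) applied to $(V, \bar Z)$ then gives strong approximation off $v_0$ for $W^{\sm} \setminus \bar Z = \widetilde V \setminus Z$, which is the desired conclusion. There is no substantive obstacle here: the corollary is a direct specialization of case (iii) of the main theorem to the smooth case, with the passage between $\widetilde V$ and the full affine cone $W$ rendered harmless by the codimension bound $n - 1 \geq 6$.
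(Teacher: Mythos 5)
Your proposal is correct and follows exactly the route the paper intends: the corollary is stated as an immediate specialization of Theorem~\ref{thm:main'}(iii), and your verification of its hypotheses (smoothness of $V$ from nonsingularity of the punctured cone, hence not a cone, geometrically integral, no singular points, and the harmless bookkeeping for $Z$) is precisely what is needed. No gaps.
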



\begin{terminology}
	Notation and terminology are standard. Let $k$ be a number field, $\Omega_k$ the set of all places of $k$ and  $\infty$ the set of all archimedean places of $k$. 
	Let $\mathcal O_k$ be the ring of integers of $k$ and $\mathcal O_S$ the $S$-integers of $k$ for a finite set $S$ of $\Omega_k$ containing $\infty$. For each $v\in \Omega_k$, the completion of $k$ at $v$ is denoted by $k_v$, the completion of $\mathcal O_k$ at $v$ by $\mathcal O_v$ and the residue field at $v$ by $k(v)$ for $v\not\in \infty$. Let ${\mathbf A}_k$ be the adele ring of $k$.
	
	  Let $X$ be a smooth variety over $k$. Denote  $X(\mathbf A_k)^B$ to be the set of all $(x_v)_v \in X(\mathbf A_k)$ satisfying $\sum_{v \in \Omega_k} \inv_v(A(x_v))
	= 0$ for each $A$ in the subgroup $B$ of the Brauer group $\Br(X) =
	H^2_\text{\'et}(X,\mathbf G_{m})$ of $X$, where the map $\inv_v : \Br(k_v)
	\to \QQ/\ZZ$ is the invariant map from local class field theory.
	The subgroup $\Br_0(X)$ of constant elements in the Brauer group is
	the image of the natural map $\Br(k) \to \Br(X)$. The algebraic
	Brauer group $\Br_1(X)$ is the kernel of the natural map $\Br(X) \to
	\Br(X_{\kbar})$, where $X_{\kbar} = X \times_k \kbar$.


	Let $S$ be a finite set of places of $k$. We say that
	\emph{strong approximation} holds for $X$ off $S$ if
	the image of the set $X(k)$ of rational points on $X$ is dense in the space
	$X(\mathbb A_k^S)$ of adelic points on $X$ outside $S$. Strong approximation for $X$
	off $S$ implies the Hasse principle for $S$-integral points on any
	$S$-integral model of $X$.  We say that
	$X(\A_k^S)$, \emph{strong approximation with Brauer-Manin obstruction} holds for $X$ off $S$ if
		$X(k)$ is dense in the image of $X(\mathbf A_k)^{\Br}$ in 
		$X(\A_k^S)$. Here, $X(\mathbf A_k)^{\Br}$ denotes the set of adelic points orthogonal to the Brauer group with respect to all finite étale covers of $X$; see \cite[Definition~2.1]{Skobogatov2001} for a precise definition. 

	For  a geometrically integral variety $X$ over a number field $k$, we denote  $X^\sm$ to be  its smooth locus.
	Let $f: \widetilde {X}\to X$ be a resolution of singularities.   If $ X^{\sm}(k)$   is dense 	the image of $f(\widetilde {X}(\mathbf A_k)^{\Br(\widetilde {X})})$
	(resp. $f(\widetilde {X}(\mathbf A_k)^{\Br_1(\widetilde {X})})$) in $X(\mathbf A_k^S)$, we say that
	\emph{central strong approximation} with (resp.  algebraic) Brauer-Manin obstruction off $S$ holds for X. The definition does not depend on the choice of the resolution of singularities by \cite[Proposition 2.3]{SX}. 

\end{terminology}

\begin{ack}
	The first author is supported by National Key R\&D Program of China (Grant No. 2020YFA0712600) and National Natural Science Foundation of China (Grant Nos. 12371014 and 12231009). 
	The third author wishes to express deep gratitude to Professor Shiu-Chun Wong, who sadly passed away recently, for his unwavering passion for teaching mathematics, his mentorship and lasting influence on the third author's journey in mathematics.  
  \end{ack}

\section{Strong Approximation for singular intersection of two quadrics}
In this section, we collect some results which will be used in the proof of Theorem ~\ref{thm:main'}. The first lemma is a generalization of the easy fibration method by \cite[Prop. 3.1]{CTX13}.
\begin{lemma}\label{lem:SA-overAn} 
	Let $S$ be a  finite non-empty set of places of $k$.	Let $L/k$ be a finite \'etale algebra.
	Let $f:Y\to  R_{L/k}(\mathbb A^1_L)\setminus F$ be a surjective morphism, where $F \subset R_{L/k}(\mathbb A^1_L)$ is a closed subset of codimension $2$. Let $U:=R_{L/k}(\mathbb G_{\text{m},L})$.  Let $W\subset U\setminus F$ be an open subset such that the fiber of $f_W: f^{-1}(W)\to W$ is smooth.  We suppose:
	\begin{itemize}
		\item all geometric fibers of $f$ above $U\setminus F$ are split;
		\item there exists a finite field extension $K$ of $k$ such that for any closed point $m$ in $R_{L/k}(\mathbb A^1_L)\setminus (U\cup F)$, the fiber $f_m\otimes_k K$ is split.
		\item the fiber of $f$ above any $k$-point of $W$ satisfies strong approximation off $S$;
		\item for any $v\in S$, $f^{-1}(W)(k_v)\to W(k_v)$ is onto.
	\end{itemize}  
	
	Then  $Y$ satisfies strong approximation with algebraic Brauer--Manin obstruction off $S$.
\end{lemma}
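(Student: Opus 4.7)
The plan is to follow the easy-fibration strategy of \cite[Prop.~3.1]{CTX13}, enhanced by a formal-lemma argument of Harari~\cite{Har94} to accommodate the finitely many non-split fibers of $f$ lying over $R_{L/k}(\mathbb{A}^1_L)\setminus (U\cup F)$. Write $B:=R_{L/k}(\mathbb{A}^1_L)\setminus F$; this is the complement of a codimension-$\geq 2$ closed subset in $\mathbb{A}^{[L:k]}_k$, so $B$ itself satisfies unconditional strong approximation off any non-empty set of places. Fix $(P_v)\in Y(\mathbf A_k)^{\Br_1(Y)}$ together with non-empty opens $\Omega_v\subset Y(k_v)$ at each $v$ in a finite set $T\subset\Omega_k\setminus S$; the target is a rational point $P\in Y(k)$ with $P\in\Omega_v$ for every $v\in T$.

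\emph{Step 1 (the main obstacle).} First I would move $(P_v)$ so that $f(P_v)\in(U\setminus F)(k_v)$ for every $v\notin S$. Only finitely many places can fail this already---by integrality of an adelic point and because the non-split locus is a proper closed subvariety of $B$---and we enlarge $T$ to include them. At each such $v$, Zariski density of $f^{-1}(U\setminus F)$ in $Y$ together with smoothness of $Y$ at $P_v$ allows us to shrink $\Omega_v$ inside $\Omega_v\cap f^{-1}(U\setminus F)(k_v)$; but a naive substitution breaks orthogonality to $\Br_1(Y)$. The remedy is Harari's formal lemma. The crucial input, and the most delicate point of the proof, is that the hypothesis ``every non-split fiber becomes split over $K$'' forces the vertical algebraic Brauer classes of $f\colon Y\to B$---those coming from residues at the components of the non-split fibers---to be controlled by characters of $\Gal(K/k)$. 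This finite group of obstructions is exactly what the formal lemma can absorb, yielding $(P'_v)\in Y(\mathbf A_k)^{\Br_1(Y)}$ with $P'_v\in\Omega_v$ for $v\in T$ (at the cost of prescribing local behavior at finitely many auxiliary places outside $T\cup S$).

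\emph{Step 2.} Now use the surjection $f^{-1}(W)(k_v)\twoheadrightarrow W(k_v)$ at $v\in S$ to replace $P'_v$ at those places by a $k_v$-point above $W(k_v)$; this change is invisible to strong approximation off $S$. At each $v\in T$ one similarly shrinks $\Omega_v$ inside $f^{-1}(W)(k_v)$, which is possible because $W$ is open in $U\setminus F$, because $f(P'_v)$ can be perturbed within $(U\setminus F)(k_v)$ to enter $W(k_v)$, and because $f$ is smooth over $W$. Strong approximation for $B$ off $S$ then produces $m\in B(k)$ arbitrarily close to $f(P'_v)$ for $v\in T\cup S$; since $W$ is open in $B$ and closeness at a single $v$ places $m$ in $W(k_v)$, automatically $m\in W(k)$. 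The implicit function theorem, applicable thanks to smoothness of $f$ over $W$, lifts each $P'_v$ to $P''_v\in Y_m(k_v)$ close to $P'_v$ for every $v\in T\cup S$.

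\emph{Step 3.} By hypothesis $Y_m$ satisfies strong approximation off $S$, so one finds $P\in Y_m(k)\subset Y(k)$ close to $(P''_v)$ at $v\in T$. The point $P$ then lies in $\Omega_v$ for every $v\in T$, proving the claim. To summarize, the heart of the proof is Step~1: formulating the vertical algebraic Brauer group of $Y\to B$, using the $K$-splitting hypothesis to keep it under the control of $\Gal(K/k)$-characters, and invoking Harari's formal lemma to substitute $(P_v)$ by a new adelic point lying above $U\setminus F$ while preserving orthogonality to $\Br_1(Y)$; the remaining two steps are routine applications of strong approximation on an open subset of affine space and on the smooth fiber $Y_m$.
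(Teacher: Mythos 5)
There is a genuine gap in Step~2/Step~3: you never establish that the fibre $Y_m$ has local points at the places outside $T\cup S$, so the hypothesis ``$Y_m$ satisfies strong approximation off $S$'' cannot be applied (an adelic point of $Y_m$ is needed before strong approximation on $Y_m$ produces anything). You choose $m\in B(k)$ merely by strong approximation on $B=R_{L/k}(\mathbb A^1_L)\setminus F$ off $S$, imposing closeness only at the finite set $T\cup S$. At the remaining places $v$, the point $m$ will in general reduce modulo $v$ into the norm hypersurface $R_{L/k}(\mathbb A^1_L)\setminus U$ (this happens at every $v\nmid S'$ with $\ord_v(N_{L/k}(m))>0$, a set of places you do not control), and over that locus the fibres of $f$ are only assumed to become split after the base change to $K$; without a degree-one place of $K$ above such a $v$, the fibre $Y_m$ need not be split at $v$ and need not have a $k_v$-point. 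You correctly identify in Step~1 that the vertical Brauer classes are governed by characters of $\Gal(K/k)$ and can be absorbed by Harari's formal lemma, but you then use that orthogonality only to move the adelic point off the bad fibres; you never feed it back into the \emph{choice of $m$}, which is where it is actually needed.

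The paper closes exactly this gap by descent: after the formal lemma, the adelic point is lifted to a twist of the norm torsor $U'\times_{R_{L/k}(\mathbb G_{m,L})}R_{K'/k}(\mathbb G_{m,K'})\to U'$ (with $K'=L\otimes_kK$), and $m=\mu_0$ is taken of the form $N_{K'/L}(a)$ for a rational $a$ obtained by strong approximation on $R_{K'/k}(\mathbb A^1_{K'})$ minus a codimension-two set. This norm shape forces, at every good place $v$ with $\ord_v(N_{L/k}(\mu_0))>0$, the existence of degree-one places of $L$ and of $K$ above $v$, whence $Y_{\mu_0}$ is split at $v$ by the second hypothesis, and Lang--Weil--Nisnevich then supplies the missing local integral points. (A secondary inaccuracy: your claim that only finitely many $v$ can have $f(P_v)$ outside $(U\setminus F)(k_v)$ is unjustified for a general adelic point; the paper instead perturbs all components into $f^{-1}(W)$ via the formal lemma plus the implicit function theorem, using that the relevant Brauer subgroup is finite.) To repair your argument you must either carry out this descent step or replace it by an explicit computation of the vertical Brauer--Manin pairing that constrains the admissible $m$; as written, Step~3 does not go through.
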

\begin{proof} Let $K'=L\otimes_k K$, let $T$ be the torus $R_{K'/k} (\mathbb G_{\text{m},K'})$ and $T'$  the norm one torus $R_{K'/k}^1 (\mathbb G_{\text{m},K'})$.	Let $U'=U\setminus F$.
	The projection $$U'\times_{R_{L/k} (\mathbb G_{m,L})} T \to U'$$ gives a $T'$-torsor over $U'$, where  the map  $T\to R_{L/k} (\mathbb G_{m,L})$ is given by sending $a\in T$ to $ \alpha N_{K'/L}(a)$, where $\alpha \in L^*$. First we fix $\alpha=1$. We denote by $\tau \in H^1_{\et}(U',T')$ its isomorphism class, the cup products
	$\tau\cup \chi$ give an element in $\Br_1(U')$ for any  $\chi \in H^1(k,\widehat T')$.
	Obviously $H^1(k,\widehat T')$ is finite. 
	Let~$A$ be the finite subgroup of $\Br_1(f^{-1}(U'))$ generated by $f^*(\tau\cup \chi)$
	where  $f^*:\Br(U')\to \Br(f^{-1}(U'))$ and $\chi \in H^1(k,\widehat T')$.
	For any $(y_v)_v \in Y(\A_k)^{A\cap\Br_1(X)}$,
	there exists $(y'_v)_v \in Y(\A_k)^{A}$
	arbitrarily close to $(y_v)_v$ in~$Y(\A_k)$ by Harari's formal lemma (\cite[Th\'eor\`eme~1.4]{CT03}).
	Since $A$ is finite, we may assume that all $y'_v$  belong to~$f^{-1}(W)$ by  the implicit function theorem.

	Let us apply open descent theory
	to the projection $f^{-1}(U')\times_{R_{L/k} (\mathbb G_{m,L})} T \to f^{-1}(U')$. 
	According to \cite[Theorem~8.4, Proposition~8.12]{HS13},
	the adelic point
	$(y'_v)_v$ can be lifted to an adelic point $(z_v)_v$ of some
	twist of this torsor.  That is to say, there exist $\alpha \in L^*$ and $(a_v)_v \in \A_{K'}$ such that
	$\alpha N_{K'/L}(a_v)=f(y'_v)$ for any place  $v$.

	Let $F'$ be the inverse image of the union of  $F$ and the singular locus of $R_{L/k}(\mathbb A^1_L)\setminus U$ in $R_{K'/k} (\mathbb A^1_{ K'})$. Then $F'$ has codimension $2$ in $R_{K'/k} (\mathbb A^1_{ K'})$. 	
	Since $R_{K'/k} (\mathbb A^1_{ K'})\setminus F'$ satisfies strong approximation off $v_0$ by \cite[Lemma ~2.1]{weitorus}, we may choose an element $a \in {K'}^*$ which corresponds a $k$-point in $R_{K'/k} (\mathbb G_{\text m,K'})$, such that $a$ is very close to $(a_v)_v$ in  $\A^S_{K'}$.
	Let $\mu_0:=N_{ K'/L}(a)$, by shrinking the element $a$, we may assume that~$\mu_0\in W$.
	
	Let~$S'\supset S$ be a finite set of places of~$k$ containing  the archimedean places,  all finite places where $K'/k$ is ramified and all places at which we want to approximate $(y_v)_{v}$.	Enlarging $S'$, we may assume that all maps can be extended to their $\mathcal O_{S'}$-models.	 
	Suppose $v\not \in S'$ and $\ord_v(N_{L/k}(\mu_0))> 0$. Since $a \not \in \mathscr F'$, one has $\mu_0\not \in \mathscr F_L$, where $\mathscr F'$ and  $\mathscr F_L$ are respectively the standard $\mathcal O_{S'}$-model of $F'$ and of the singular locus of $R_{L/k}(\mathbb A^1_L)\setminus U$. Therefore, $L$ has a place $w$ of degree $1$ above $v$ such that $\ord_w(\mu_0)> 0$ and $K$ also has a place of degree $1$ above $v$,  hence  $Y_{\mu_0}$ is split at $v$ by the second assumption. If $v\not \in S'$ and $\ord_v(N_{L/k}(\mu_0))= 0$, $Y_{\mu_0}$ is split at $v$ since $\mu_0 \in \mathscr U$, where $\mathscr U$ is $\mathcal O_{S'}$-model of $U$. Therefore, by the Lang--Weil--Nisnevich estimate~\cite{langweil, nisnevic},
	we may assume that $Y_{\mu_0}$ has local integral points at any $v\not \in S'$. If $v\in S'\setminus S$, we may choose a local point on $ Y_{\mu_0}(k_v)$ which is very close to~$y'_v$ since $Y_{\mu_0}$ is smooth. If $v\in S$, we may choose a local point $y'_v\in Y_{\mu_0}(k_v)$ by the surjectivity of $f^{-1}(W)(k_v)\to W(k_v)$. The proof then follows from that $Y_{\mu_0}$ satisfies strong approximation off $S$.
\end{proof}

For any projective variety $X\subset \mathbb P^n_k$ over a field $k$, let $\widetilde X$ be the punctured affine cone of $X$, which can be viewed as a $\GG_{\text{m}}$-torsor over $X$ of type $\chi: \mathbb Z \rightarrow \Pic(X), 1\mapsto -H$, where $H$ is the restriction on $X$ of the hyperplane divisor of $\mathbb P^n_k$. 

For simplicity, in what follows, we will denote the type of any $\GG_m$-torsor by the corresponding element of the Picard group (e.g., $1\mapsto a$ for $a\in \Pic(X)$).

\begin{lemma}\label{lem:SA-U} 
	Let $M=\{p_1,\cdots, p_s\}$ be the set of $s$ distinct $\kbar$-points in $\mathbb A^2_k$ with $s\geq 1$ ($M$ is defined over $k$). Let $U:=Bl_M(\mathbb A^2_k)$ be the blow-up of $\mathbb A^2_k$ along $M$. Let $l_i$ be the exceptional curve above $p_i$. Let $\widetilde U$ be the $\GG_{\text{m}}$-torsor over $U$ of type $1\mapsto -(l_1+\cdots +l_s)$. 	
	
	Then,	
	\begin{enumerate}
		\item[(i)] $\widetilde U$ is isomorphic to the variety of $\mathbb A^4_k$ defined by the equation 
		\begin{equation}\label{eq:Bl4}
			f(x)\lambda=y\mu,
		\end{equation} where $(x,y) \in \mathbb A^2_k$, $(\lambda,\mu)\in \mathbb A^2_k\setminus {(0,0)}$ and $f(x)$ is a separable polynomial of degree $s$. 
		\item[(ii)]  $\Br(\widetilde U_{\kbar})=0$ and $\Br(\widetilde U)/\Br_0(\widetilde U)$ is finite.
		
		\item[(iii)] 	$\widetilde U\setminus Z$ satisfies strong approximation with algebraic Brauer-Manin obstruction off $v_0$, where $Z\subset \widetilde U$ has codimension  at least 2.
	\end{enumerate}
\end{lemma}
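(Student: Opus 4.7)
The strategy for (i) is to reduce to a standard description of the blow-up and then exhibit the $\mathbb{G}_m$-torsor explicitly. First I would find a $k$-rational linear change of coordinates on $\mathbb{A}^2$ so that $M$ lies on the line $\{y=0\}$ with distinct first coordinates $a_1,\ldots,a_s$. Since $M$ is Galois-stable, the $a_i$ are the roots of a separable polynomial $f(x)=\prod_i(x-a_i)\in k[x]$, and Lagrange interpolation inside $A:=k[x]/(f(x))$ yields a polynomial $h(x)\in k[x]$ with $h(a_i)=y_i$, where $(a_i,y_i)$ are the points after choosing a $k$-rational projection direction that is injective on $M$ (available since $k$ is infinite). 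After this change, $M$ is cut out scheme-theoretically by the ideal $(y,f(x))$, so $U=Bl_M(\mathbb{A}^2_k)\subset \mathbb{A}^2\times \mathbb{P}^1$ is defined by $f(x)\lambda=y\mu$. The $\mathbb{G}_m$-torsor of type $-E$ with $E=l_1+\cdots+l_s$ can be identified with the pullback of the tautological torsor $\mathbb{A}^2\setminus\{0\}\to \mathbb{P}^1$ along the projection $U\to \mathbb{P}^1$, $(x,y,[\lambda:\mu])\mapsto[\lambda:\mu]$; concretely this pullback is exactly $\{f(x)\lambda=y\mu,\ (\lambda,\mu)\neq(0,0)\}\subset \mathbb{A}^4$. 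To verify the type, I would observe that $\lambda$ is a global section of the associated line bundle $L=\mathcal{O}_{\mathbb{P}^1}(1)|_U$ whose divisor on $U$ is precisely the strict transform $\widetilde{C}$ of $\{y=0\}$; since $\pi^*(\{y=0\})=\widetilde{C}+E$ is linearly trivial on $U$, we obtain $[L]=[\widetilde{C}]=-[E]$.

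For (ii), I would invoke the Leray exact sequence associated to the $\mathbb{G}_m$-torsor $\widetilde{U}\to U$:
\[
\cdots\to \Pic(U)\to \Pic(\widetilde{U})\to H^1(U,\mathbb{Z})\to \Br(U)\to \Br(\widetilde{U})\to H^2(U,\mathbb{Z})\to\cdots.
\]
Base changed to $\kbar$, the variety $U_{\kbar}$ is simply connected and rational, so $H^i(U_{\kbar},\mathbb{Z})=0$ for $i=1,2$ and $\Br(U_{\kbar})=\Br(\mathbb{A}^2_{\kbar})=0$ by blow-up invariance; combined with $\Pic(U_{\kbar})=\bigoplus_{i=1}^s\mathbb{Z}[l_i]$, this gives $\Br(\widetilde{U}_{\kbar})=0$ and $\Pic(\widetilde{U}_{\kbar})\cong \mathbb{Z}^s/\mathbb{Z}\langle\sum[l_i]\rangle$, the augmentation quotient of the permutation $G_k$-module spanned by the exceptional curves. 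Since $\kbar[\widetilde{U}_{\kbar}]^\times=\kbar^\times$ (from the first segment of the Leray sequence together with the injectivity of $\chi:\mathbb{Z}\to \Pic(U_{\kbar})$), the Hochschild--Serre spectral sequence produces an injection
\[
\Br(\widetilde{U})/\Br_0(\widetilde{U})\hookrightarrow H^1(k,\Pic(\widetilde{U}_{\kbar})),
\]
and the target is finite since $G_k$ acts through a finite quotient on the finitely generated $\mathbb{Z}$-free module $\Pic(\widetilde{U}_{\kbar})$.

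For (iii), I would apply Lemma~\ref{lem:SA-overAn} to the projection $p:\widetilde{U}\to \mathbb{A}^1$, $(x,y,\lambda,\mu)\mapsto \mu$, taking $L=k$, $F=\emptyset$, and $W=\mathbb{G}_m$. The fiber of $p$ over any $\mu_0\in \mathbb{G}_m$ is isomorphic to $\mathbb{A}^2_{(x,\lambda)}$ (with $y=f(x)\lambda/\mu_0$), hence split, smooth, and satisfies strong approximation off $v_0$; the surjectivity of $p^{-1}(W)(k_{v_0})\to W(k_{v_0})$ is immediate. The only closed point outside $\mathbb{G}_m$ is $\{0\}\in \mathbb{A}^1$, whose fiber is $\Spec(k[x]/f(x))\times \mathbb{A}^1\times \mathbb{G}_m$; this becomes split after base change to the splitting field of $f(x)$. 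Lemma~\ref{lem:SA-overAn} thus yields strong approximation with algebraic Brauer--Manin obstruction off $v_0$ for $\widetilde{U}$. To pass to $\widetilde{U}\setminus Z$, I would use purity ($\Br(\widetilde{U}\setminus Z)=\Br(\widetilde{U})$ since $\widetilde{U}$ is smooth and $Z$ has codimension at least $2$): given $(P_v)\in(\widetilde{U}\setminus Z)(\A_k)^{\Br_1}$, strong approximation on $\widetilde{U}$ produces $p\in\widetilde{U}(k)$ arbitrarily close to $(P_v)$, and by imposing an open neighborhood condition at one auxiliary place $v\neq v_0$ where $P_v\notin Z(k_v)$ and $Z(k_v)$ is closed, we can force $p\notin Z(k_v)$, hence $p\notin Z$ and $p\in(\widetilde{U}\setminus Z)(k)$.

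The main technical obstacle lies in (i): one must simultaneously find the $k$-rational normalization (requiring both a generic projection direction and a Lagrange interpolation defined over $k$), exhibit the torsor as an explicit affine subvariety of $\mathbb{A}^4$, and verify that its Picard class equals $-(l_1+\cdots+l_s)$. Once (i) is secured, the Picard and Brauer computations in (ii) follow a standard template, and the passage from $\widetilde{U}$ to $\widetilde{U}\setminus Z$ at the end of (iii) is the customary codimension-two avoidance argument.
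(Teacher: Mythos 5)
Your treatment of (i) coincides with the paper's (generic projection, interpolation to move $M$ onto $\{y=0\}$, blow-up equation $f(x)\lambda=y\mu$); your extra verification of the type via $\div(\lambda)=\widetilde C$ and $\pi^*\{y=0\}=\widetilde C+E$ is sound (the overall sign of the type is convention-dependent and immaterial, since a $\GG_m$-torsor and its inverse are isomorphic as $U$-varieties). For (ii) you take a slightly different route: the paper fibers $\widetilde U\to\mathbb A^1_\mu$ and applies Tsen to the generic fiber $\mathbb A^2_{\kbar(\eta)}$, whereas you use purity/Gysin for the zero section together with $\Br(Bl_M\mathbb A^2_{\kbar})=0$ and simple connectedness; both give $\Br(\widetilde U_{\kbar})=0$, and the finiteness of $\Br_1/\Br_0$ via Hochschild--Serre and $H^1(k,\Pic(\widetilde U_{\kbar}))$ is the same in both.

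The genuine gap is in the last step of (iii). You apply Lemma~\ref{lem:SA-overAn} to $\widetilde U$ itself and then try to pass to $\widetilde U\setminus Z$ by choosing one auxiliary place $v$ with $P_v\notin Z(k_v)$ and forcing the approximating point $p$ to avoid $Z$ there. This only produces $p\in(\widetilde U\setminus Z)(k)$ close to $(P_v)_v$ in the topology of $\widetilde U(\A_k^{v_0})$. Strong approximation for $\widetilde U\setminus Z$ off $v_0$ requires density in $(\widetilde U\setminus Z)(\A_k^{v_0})$, whose restricted-product topology (with respect to integral models of $\widetilde U\setminus Z$) is strictly finer: a basic open set imposes that $p$ does not reduce into $Z$ modulo $v$ for \emph{every} place outside the finite set where closeness is prescribed, and nothing in your construction prevents $p$ from reducing into $Z$ at some such place. (This is exactly why the statement ``$\mathbb A^n$ minus a codimension-$2$ closed set satisfies strong approximation off $v_0$'' is a theorem, e.g.\ \cite[Lemma 2.1]{weitorus}, rather than a formal consequence of strong approximation for $\mathbb A^n$.) The repair is to run the fibration argument on $Y=\widetilde U\setminus Z$ directly: since $\dim Z\le 1$, after shrinking $W\subset\GG_m$ the fiber of $\mu$ over a $k$-point of $W$ is $\mathbb A^2_k$ minus a finite closed subset, which satisfies strong approximation off $v_0$ by the cited result; all other hypotheses of Lemma~\ref{lem:SA-overAn} are checked exactly as you did (in particular your verification of the splitting of the fiber over $\mu=0$ after base change to the splitting field of $f$ is correct and is more explicit than the paper's).
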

\begin{proof} Choose a linear change of coordinates, we may assume that the $x$-coordinates of $p_1,\cdots, p_s$ are different, then $x$-coordinates of $p_1,\cdots, p_s$ are just all roots of a separable polynomial $f(x)$ of degree $s$. Then $y$-coordinates of $p_i$ satisfies $y=g(x)$, here $x$ is the $x$-coordinates of $p_i$ and $g(x)$ is a $k$-polynomial of degree at most $s-1$. By the change of coordinates $x'=x, y'=y-g(x)$. Therefore we may assume that $M$ is the set defined by $f(x)=0, y=0$. Hence $U$ is isomorphic to the variety of $\mathbb A^2_k \times \mathbb P^1_k$ defined by the equation $$f(x)\lambda=y\mu,$$ where $(x,y) \in \mathbb A^2_k$, $(\lambda:\mu)\in \mathbb P^1$. Let  $\widetilde U$ be of the form described as in (1), then $\widetilde U$ is the $\GG_{\text{m}}$-torsor over $U$ of type $1\rightarrow -(l_1+\cdots +l_s)$.
	
	We have a fibration $f:\widetilde U \to \mathbb A^1_k$ by sending $(x,y,\lambda,\mu)$ to $\mu$.  
	The generic fiber $\widetilde U_\eta$ is isomorphic to an affine space over the function field $k(\eta)$, so over $\kbar(\eta)$ it is an affine space over an algebraically closed field. By Tsen's theorem (see, e.g., \cite[III.2.8]{Serre2002}), the Brauer group of such a function field vanishes, i.e., $\Br(\widetilde U_{\eta, \kbar}) = \Br(\kbar(\eta)) = 0$, and thus by Hochschild-Serre's spectral sequence, $\Br(\widetilde U_\kbar) = 0$.
	Note that $\Pic (\widetilde U_\kbar)\cong \Coker[\mathbb Z\to \bigoplus_{i=1}^s\mathbb Z l_i]$, where the map sends $1$ to $(1,1,\ldots,1)$, as follows from the standard computation of the Picard group of a blow-up at $s$ points (see, e.g., \cite[II.6.5, Example 6.7.2]{Hartshorne}), which implies $\Pic (\widetilde U)$ is free and finitely generated, hence  
	$\Br(\widetilde U)/\Br_0(\widetilde U)$ is finite. 
	
	Let $V\subset \mathbb A^1_k$ be an open subset defined by $\mu \neq 0$. Note that $f^{-1}(V)$ is a product of $V$ and $\mathbb A^1_k$. 
	To apply Lemma~\ref{lem:SA-overAn}, observe that the morphism $f: \widetilde U \to \mathbb{A}^1_k$ is surjective, all geometric fibers above $V$ are isomorphic to $\mathbb{A}^1_k$ (hence split), and for any $k$-point of $V$, the fiber satisfies strong approximation off $v_0$ (since it is an affine line). 
	Moreover, for any place $v$, the map $\widetilde U(k_v) \to V(k_v)$ is surjective by the structure of the equation. 
	Thus, all the hypotheses of Lemma~\ref{lem:SA-overAn} are satisfied, and the proof of case $(3)$ follows.
\end{proof}

Next, we collect some results on strong approximation for $\GG_{\text{m}}$-torsors over singular del Pezzo surfaces of degree $4\leq d \leq 8$. Singular del Pezzo surfaces of degree $d$ are classified in \cite{CorT88}, where the singularities are all du Val singularities. The following lemma is a direct consequence of Lemma~\ref{lem:SA-U}.

\begin{lemma}\label{lem: S-bl}
	Suppose  $V=Bl_{M_s}(S)$, where $S$ is $\mathbb P^2$, a quadric surface, or $F_2$ (where $F_2$ denotes the Hirzebruch surface $\mathbb F_2$), and $M_s$ is a $k$-subset of $S(\kbar)$ consisting of $s$ distinct points, $s\geq 1$.
	Let $\widetilde V$ be the $\GG_{\text{m}}$-torsor over $V$ of type $K_V$, where $K_V$ is the canonical  divisor. Let $Z$ be a closed subset of $\widetilde V$ of codimension at least $2$. 
	
	Then, $\widetilde V \setminus Z$ satisfies strong approximation with algebraic Brauer-Manin obstruction off $v_0$, where $v_0$ is a place of $k$.
\end{lemma}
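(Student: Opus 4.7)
The plan is to identify an open subvariety $\widetilde V^\circ \subset \widetilde V$ with the variety $\widetilde U$ of Lemma \ref{lem:SA-U}, and then to extend strong approximation from $\widetilde V^\circ \setminus Z'$ to $\widetilde V \setminus Z$ via Harari's formal lemma.

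In each of the three cases for $S$, I would choose a $k$-rational divisor $D \subset S$ with $D \cap M_s = \emptyset$, with $S \setminus D \cong \mathbb A^2_k$, and with $K_S$ supported on $D$: for $S = \mathbb P^2$, a $k$-line avoiding $M_s$ (so $K_S = -3D$); for a smooth quadric surface, a union of two generic $k$-curves from the two rulings missing $M_s$ (so $K_S \sim -2D$); and for $S = F_2$, a divisor $D = C \cup f_\infty$ where $C \in |C_0 + 2f|$ is a $k$-section disjoint from $M_s$ (sections disjoint from $C_0$ form an open subset of the $\mathbb P^3$-linear system $|C_0+2f|$, which has plenty of $k$-points avoiding any given finite $M_s$) and $f_\infty$ is a $k$-fiber missing $M_s$ (so $K_S \sim -2C$). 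Letting $\pi \colon V \to S$ be the blow-up and $\tilde D := \pi^{-1}(D)$, the condition $D \cap M_s = \emptyset$ gives $V \setminus \tilde D \cong Bl_{M_s}(\mathbb A^2_k)$, and combining $K_V = \pi^* K_S + \sum_{i=1}^{s} l_i$ with the vanishing of $\pi^* K_S$ on $V \setminus \tilde D$ in $\Pic$ yields $K_V|_{V \setminus \tilde D} = \sum l_i$. Hence $\widetilde V^\circ := \widetilde V|_{V \setminus \tilde D}$ is the $\GG_{\text{m}}$-torsor of type $\sum l_i$ over $Bl_{M_s}(\mathbb A^2_k)$, which is isomorphic as a scheme to the variety $\widetilde U$ of Lemma \ref{lem:SA-U} (the sign of the type only inverts the $\GG_{\text{m}}$-action). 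Setting $Z' := Z \cap \widetilde V^\circ$, still of codimension $\geq 2$ in $\widetilde V^\circ$, Lemma \ref{lem:SA-U}(iii) gives strong approximation with algebraic Brauer-Manin obstruction off $v_0$ for $\widetilde V^\circ \setminus Z'$.

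The main obstacle is to upgrade this to all of $\widetilde V \setminus Z$, whose complement $\widetilde V|_{\tilde D} \setminus Z$ is of codimension one (not two), so it cannot simply be absorbed into $Z$. Here I would invoke Harari's formal lemma (\cite[Th\'eor\`eme~1.4]{CT03}): by Lemma \ref{lem:SA-U}(ii) together with Gabber's purity (using that $Z'$ has codimension $\geq 2$ in the smooth variety $\widetilde V^\circ$), the group $\Br_1(\widetilde V^\circ \setminus Z')/\Br_0$ is finite; for any $(x_v)_v \in (\widetilde V \setminus Z)(\mathbf A_k)^{\Br_1}$, the formal lemma applied to the open immersion $\widetilde V^\circ \setminus Z' \hookrightarrow \widetilde V \setminus Z$ with a finite lift $B$ of this quotient produces $(x'_v)_v \in (\widetilde V^\circ \setminus Z')(\mathbf A_k)^{\Br_1}$ arbitrarily close to $(x_v)_v$ off $v_0$. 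Lemma \ref{lem:SA-U}(iii) then furnishes a rational point of $\widetilde V^\circ \setminus Z' \subset \widetilde V \setminus Z$ approximating $(x'_v)_v$, and hence $(x_v)_v$, off $v_0$.
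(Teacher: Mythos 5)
Your proposal is correct and follows essentially the same route as the paper: the paper's proof likewise exhibits an open subset of $S$ containing $M_s$ isomorphic to $\mathbb A^2_k$, observes that $Bl_{M_s}(\mathbb A^2_k)\subset V$ is open, and concludes by Lemma~\ref{lem:SA-U}~(ii)+(iii). You merely make explicit three points the paper leaves implicit --- the construction of the affine chart as the complement of a divisor supporting $K_S$, the computation that the type-$K_V$ torsor restricts to the type-$\pm\sum l_i$ torsor of Lemma~\ref{lem:SA-U}, and the formal-lemma step (using finiteness of $\Br_1/\Br_0$ via purity) needed to pass from the dense open subset back to all of $\widetilde V\setminus Z$ --- which is a faithful expansion rather than a different argument.
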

\begin{proof}
	We may assume $S(k)\neq \emptyset$. Therefore, there is an open subset of $S$ containing $M_s$ which is isomorphic to the affine space of dimension $2$. Then $Bl_{M_s}(\mathbb A^2_k)\subset Bl_{M_s}(S)$ as an open subset. The proof then follows from Lemma~\ref{lem:SA-U} ii)+iii).
\end{proof}

\begin{thm}\label{thm:SA-general-type}
	Let $V$ be a singular del Pezzo surface of degree $4\leq d \leq 8$, and $V$ is not  an Iskovskih surface when $d=4$. Let $\widetilde V$ be the $\GG_{\text{m}}$-torsor over $V$ of type $K_V$, where $K_V$ is the canonical  divisor. Let $\Delta$ be the preimage of the singular locus of $V$ in $\widetilde V$. Let $Z$ be a closed subset of $\widetilde V$ of codimension at least $2$. 
	
	Suppose that $\Delta \cap Z$ is a finite set.  Then, $\widetilde V \setminus Z$ satisfies central strong approximation with algebraic Brauer-Manin obstruction off $v_0$ (see Terminology in Section~1 for the definition), where $v_0$ is a place of $k$.
\end{thm}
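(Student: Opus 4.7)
The plan is to reduce Theorem~\ref{thm:SA-general-type} to Lemma~\ref{lem: S-bl} by passing to the minimal desingularization of $V$ and invoking the classification of singular del Pezzo surfaces of degree $4\le d\le 8$ from \cite{CorT88}.

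First I would let $\pi\colon V'\to V$ be the minimal resolution of singularities. Since the singularities of $V$ are du Val, $\pi$ is crepant, so $\pi^{*}K_{V}=K_{V'}$ and $V'$ is a smooth weak del Pezzo surface of degree $d$. Pulling back the $\GG_{\mathrm m}$-torsor $\widetilde V\to V$ of type $K_V$ along $\pi$ then yields the $\GG_{\mathrm m}$-torsor $\widetilde{V'}\to V'$ of type $K_{V'}$, together with a proper birational morphism $\widetilde{\pi}\colon \widetilde{V'}\to \widetilde V$ whose exceptional locus is the union of the $\GG_{\mathrm m}$-orbits of the $(-2)$-curves on $V'$, lying entirely above $\Delta$; in particular $\widetilde{\pi}$ is an isomorphism over $\widetilde V^{\mathrm{sm}}$ and provides a resolution of singularities of $\widetilde V$, so it may be used in the definition of central strong approximation.

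Next I would invoke the classification in \cite{CorT88}. With the Iskovskih surface at $d=4$ excluded, every such $V$ has the property that its minimal resolution $V'$ can be presented over $k$ as $Bl_{M_s}(S)$ with $S\in\{\mathbb P^{2},\ \text{smooth quadric},\ F_{2}\}$ and $M_s$ a $k$-subset of $S$ consisting of finitely many distinct closed points. The hypothesis that $\Delta\cap Z$ is finite, together with the fact that the fibers of $\widetilde{\pi}$ over $\Delta$ are at most one-dimensional, ensures that $\widetilde{\pi}^{-1}(Z)$ still has codimension $\ge 2$ in $\widetilde{V'}$. Lemma~\ref{lem: S-bl} then applies and gives that $\widetilde{V'}\setminus \widetilde{\pi}^{-1}(Z)$ satisfies strong approximation with algebraic Brauer-Manin obstruction off $v_0$. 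Transferring this back: given $(x_v)_v\in(\widetilde V\setminus Z)(\A_k)$ orthogonal to $\Br_1(\widetilde{V'})$, one perturbs each $x_v$ by the implicit function theorem into $\widetilde V^{\mathrm{sm}}\setminus Z$ (possible since $\Delta\cap Z$ is finite), lifts canonically through the isomorphism $\widetilde{\pi}^{-1}(\widetilde V^{\mathrm{sm}})\isoto \widetilde V^{\mathrm{sm}}$, approximates by a rational point via the previous step, and pushes this rational point down to $V^{\mathrm{sm}}$ via $\pi\circ\widetilde{\pi}$; independence of the chosen resolution is guaranteed by \cite[Proposition 2.3]{SX}.

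The main obstacle is the classification step: one must verify, case by case for each degree $4\le d\le 8$ and each admissible configuration of du Val singularities, that $V'$ genuinely arises as a blow-up of $\mathbb P^{2}$, a smooth quadric, or $F_{2}$ at a $k$-set of \emph{distinct} points, and identify precisely why this presentation fails over $k$ for the Iskovskih surface (so that it must be excluded). The remaining ingredients, namely the crepancy of $\pi$, the codimension bound on $\widetilde{\pi}^{-1}(Z)$, and the transfer of approximation through $\widetilde{\pi}$, are essentially formal once the classification is in hand.
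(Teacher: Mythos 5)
Your overall strategy is the same as the paper's (pass to the minimal resolution, use crepancy to identify the pulled-back torsor as the one of type $K_{V'}$, check that the preimage of $Z$ still has codimension $\geq 2$, then invoke the classification of \cite{CorT88} to reduce to Lemma~\ref{lem: S-bl}), and the reduction steps you describe are sound. However, the classification claim on which you hang everything is false as stated, and you have correctly identified it as the main obstacle without resolving it. It is \emph{not} true that every admissible $V'$ is isomorphic over $k$ to $Bl_{M_s}(S)$ for a set $M_s$ of \emph{distinct} points with $s\geq 1$: many du Val configurations force blow-ups at infinitely near points. The paper's fix is the observation that in all such cases $V'$ still \emph{contains an open subset} of the form $Bl_{M_s}(S)\setminus Z'$ with $Z'$ a finite set, whose complement is a union of exceptional rational curves, so that $K_{V'}$ restricts to $K_{Bl_{M_s}(S)}$ and Lemma~\ref{lem: S-bl} (which already tolerates removing a codimension-$2$ closed subset) still applies. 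That extra idea is genuinely needed and is missing from your argument.

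More seriously, the case $d=8$ falls entirely outside your scheme: there the minimal desingularization is $F_2$ itself, with no blow-up at all ($s=0$), so Lemma~\ref{lem: S-bl} cannot be invoked. The paper treats this case by a completely different mechanism: it notes that $\Pic(\widetilde V)\cong \ZZ/2$, that the universal torsor of $\widetilde V$ coincides with the (unique) universal torsor of $V$, which is a conic cone whose minimal desingularization contains $\mathbb A^3_k$, and then concludes by strong approximation for affine space minus a codimension-$2$ set \cite[Lemma 2.1]{weitorus} together with descent theory \cite{CTS87}. Without this (or some substitute), your proof does not cover degree $8$, so as written the argument is incomplete.
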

\begin{proof} Let $\pi:V'\to V$ be the minimal resolution of $V$. It is a generalized del Pezzo surface. Since $V$ has du Val singularity, $\pi^*K_V=K_{V'}$. Let $\widetilde{V'}$ be the pullback $\GG_m$-torsor $\pi^*(\widetilde{V})$ over $V'$ and it is of the type $K_{V'}$. Let  $p:\widetilde{V'}\to \widetilde{V}$ be the natural morphism. The close subset $p^{-1}(Z)$ is of codimension at least $2$ by assumption. It suffices to prove the strong approximation holds for $\widetilde{V'}\setminus p^{-1}(Z) $.  
	
	By the classification of generalized del Pezzo surfaces of degree $4$ (see \cite[Proposition 6.1]{CorT88}), such a surface is birational to either $\mathbb{P}^2$, a quadric surface, or the Hirzebruch surface $F_2$, except in the case of Iskovskih surfaces. 
	For clarity, in the following list, "case (1)" refers to the specific configuration described as case (1) in \cite[Proposition 6.1]{CorT88}. The classification is as follows:

	\begin{itemize}
		\item the minimal desingularization $V'$ of $V$ is just $Bl_{M_s}(S)$:	
		\subitem $S=\mathbb P^2 (s=5)$: case (2);
		\subitem $S=\text{a quadric surface} (s=4)$: case (1), (4), (6), (9);
		\subitem $S=\mathbb F_2 (s=4)$: case (3) (not an Iskovskih surface), (8);
		\item the minimal desingularization $V'$ of $V$ contains an open subset $Bl_{M_s}(S)\setminus Z'$, where $Z'$ is a finite set:
		\subitem $S=\mathbb P^2$: case (7)($s=2$ or $3$), (12)($s=1$), (15) ($s=1$).
		\subitem $S=\text{a quadric surface}$: case (5)($s=2$), (10)($s=2$), (11)($s=2$ or $3$).
		\subitem $S=\mathbb F_2$: case (13)($s=2$),  (14)($s=2$).
	\end{itemize}
	For the first class, the proof follows from Lemma \ref{lem: S-bl}. For the second class, the complement of $Bl_{M_s}(S)$ is a union of exceptional rational curves and thus we have the restriction of $K_{V'}$ is $K_{Bl_{M_s}(S)}$. So the proof also follows from Lemma \ref{lem: S-bl}. 
	
	If $5\leq d\leq 7$, the proof is similar as above by \cite[Proposition 8.1, 8.3 and 8.5]{CorT88} which give the classification of singular del Pezzo surfaces of degree $5\leq d\leq 7$. If $d=8$ ($i.e.$, the minimal desingularization of $V$ is $F_2$), then $\Pic(\widetilde V)\cong \ZZ/2$. Any universal torsor of $\widetilde V$ is also a universal torsor of $V$. The universal torsor of $V$ is unique and isomorphic to a conic cone, whose minimal desingularization contains an affine space of dimension 3, the proof then
	follows from \cite[Lemma 2.1]{weitorus} 
    and descent theory \cite{CTS87}.
\end{proof}

\begin{remark}
	If $V$ is a smooth or singular del Pezzo surface of degree $4$, then $V\subset \mathbb P^4_k$ is the intersection of two quadrics $\Psi_1(x_0,\cdots,x_4)=\Psi_2(x_1,\cdots,x_4)=0,$ where $\Psi_1, \Psi_2$ are quadratic forms. The $\GG_{\text{m}}$-torsor $\widetilde V$ of type $K_V$  is just the punctured affine cone in $\mathbb A^5_k\setminus \{(0,\cdots,0)\}$ defined by $\Psi_1(x_0,\cdots,x_4)=\Psi_2(x_1,\cdots,x_4)=0$.
\end{remark}

Lastly, we prove strong approximation for certain singular intersection of two quadrics in $\mathbb P^n_k$ with $n\geq 4$.

\begin{prop}\label{prop:genral}
	Let~$k$ be a field of characteristic~$0$ and $n\geq 4$ an integer. Let
	$\Psi_1(x_0,\cdots,x_n)=x_0x_1+\Psi'_1(x_1,\cdots,x_n)$ and $\Psi_2(x_1,\cdots,x_n)$
	are quadratic forms. 
	Suppose  that  $\Psi_2$ has rank~$n$ and that the projective variety $C\subset \mathbb P^{n-2}$ defined by $$\Psi'_1(0,x_2,\cdots,x_n)=\Psi_2(0, x_2,\cdots,x_n)=0$$ is a smooth complete intersection (if $n=4$, then $C$ is a union of $4$ distinct points). 
	Let~$\widetilde V\subset \mathbb A^{n+1}\setminus {(0,\cdots,0)}$ be defined by 
	\begin{equation}\label{eq:general}
		\Psi_1(x_0,\cdots,x_m)=\Psi_2(x_0,\cdots,x_n)=0.
	\end{equation}
	Let $Z\subset \widetilde V$ has codimension at least $2$ and let  $\widetilde U=\widetilde V^{\sm}\setminus Z$. 
	Then	
	\begin{enumerate}[label=(\roman*)]
		\item $\widetilde V$ is geometrically integral with the singular locus $\{(a,0,\cdots,0):a\in \kbar^*\}$;
		\item  We have $\kbar[\widetilde U]^*=\kbar^*$ and $\Br(\widetilde U_{\kbar})=0$; 
		\item If $n \geq 5$, then $\Pic(\widetilde U_{\bar k})=0$.
		If $n=4$, then $\Pic(\widetilde U_{\bar k})\simeq \ZZ^3$ (ignoring Galois actions).
		\item If $n= 4$, then 
		$\Br(\widetilde U)/\Br_0(\widetilde U) \cong \Ker[H^2(k,\ZZ)\to H^2(k,\ZZ(C(\kbar)))]$ which is finite. 
		If $n\geq 5$, then $\Br(\widetilde U)=\Br_0(\widetilde U)$.  
	\end{enumerate}
	
	Let $k$ be a number field and  $v_0$  a place of $k$. Suppose $\{(a,0,\cdots,0):a\in \kbar^*\}\not \subset Z$. 
	\begin{enumerate}[label=(\roman*)]
		\setcounter{enumi}{4}
		\item If $n=4$, then $\widetilde V\setminus Z$ satisfies central strong approximation with algebraic Brauer-Manin obstruction off $v_0$.
		\item If $n\geq 5$, then $\widetilde V\setminus Z$ satisfies central strong approximation off $v_0$.
	\end{enumerate}
\end{prop}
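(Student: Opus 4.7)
The analysis is driven by a single structural observation: on the open locus $\widetilde V_1 := \widetilde V \cap \{x_1 \neq 0\}$, the first equation solves $x_0 = -\Psi_1'(x_1, \ldots, x_n)/x_1$, and after rescaling $t = x_1$, $y_i = x_i/t$, one obtains an isomorphism $\widetilde V_1 \cong \Gm \times Q''$, where $Q'' \subset \A^{n-1}$ is the smooth affine quadric $\Psi_2(1, y_2, \ldots, y_n) = 0$ of dimension $n-2$. From here, (i) is immediate: $\Gm \times Q''$ is geometrically integral for $n \geq 4$ (since $\Psi_2$ has rank $n$), so $\widetilde V$ is geometrically integral as the closure of $\widetilde V_1$; a direct Jacobian computation on $\widetilde V \cap \{x_1 = 0\}$, combined with the smoothness of $C$, rules out any other singular point and identifies the singular locus as the punctured line $\{(a, 0, \ldots, 0) : a \in \kbar^*\}$.

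For (ii)--(iv), I would work over $\kbar$ and exploit the localization sequence
\[
\kbar^* \to \kbar[\widetilde V_{\kbar}^{\sm}]^* \to \kbar[\widetilde V_{1,\kbar}]^* \to \bigoplus_i \Z[D_i] \to \Pic(\widetilde V_{\kbar}^{\sm}) \to \Pic(\widetilde V_{1,\kbar}) \to 0,
\]
where the $D_i$ are the irreducible components of $\widetilde V_{\kbar}^{\sm} \cap \{x_1 = 0\}$. Tracking the decomposition $\widetilde V^{\sm} \cap \{x_1 = 0\} = \A^1_{x_0} \times (\widetilde C \setminus \{0\})$, these components are in bijection with the geometric components of $C$: one for $n \geq 5$ (smooth complete intersections of dimension $\geq 1$ in $\bP^{n-2}$ are connected) and four for $n = 4$. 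The invariants of $\widetilde V_{1,\kbar} \cong \Gm \times Q''$ are read off from the standard cohomology of smooth affine quadrics together with the triviality of $\Br(\Gm)$ over $\kbar$, and the single essential relation is $\mathrm{div}(x_1) = \sum_i D_i$. Cohomological purity then removes the codimension-two subset $Z$ and yields (ii) and (iii). For (iv), the Hochschild--Serre spectral sequence together with $\Br(\widetilde U_{\kbar}) = 0$ from (ii) gives an injection $\Br(\widetilde U)/\Br_0(\widetilde U) \hookrightarrow H^2(k, \Pic(\widetilde U_{\kbar}))$; for $n \geq 5$ the right-hand side vanishes, while for $n = 4$ the identification of $\Pic(\widetilde U_{\kbar})$ as the kernel of the sum map $\bigoplus_{p \in C(\kbar)} \Z \to \Z$, with Galois permuting the summands, produces exactly the stated kernel.

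For (v) and (vi), I would apply Lemma~\ref{lem:SA-overAn} to the projection $f : \widetilde U \to \A^1_k$, $(x_0, \ldots, x_n) \mapsto x_1$, with $L = k$ (so $F = \emptyset$) and $W = \Gm$. Every fiber over $\Gm$ is a smooth affine quadric in $\A^{n-1}$ of dimension $n - 2$; by standard results on affine quadrics, such fibers satisfy strong approximation off $v_0$ (with algebraic Brauer--Manin obstruction when $n - 2 \leq 3$, and unconditionally when $n - 2 \geq 4$). The unique non-split fiber $f^{-1}(0) = \A^1_{x_0} \times \widetilde C$ is split over $k$ for $n \geq 5$ (as $C$ is geometrically integral) and split over the splitting field $K$ of $C(\kbar)$ for $n = 4$. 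The remaining hypotheses of Lemma~\ref{lem:SA-overAn} follow from the quadric structure. Lemma~\ref{lem:SA-overAn} therefore yields strong approximation for $\widetilde U$ with algebraic Brauer--Manin obstruction off $v_0$, which is (v); for (vi), part (iv) shows that the obstruction is already trivial. The hypothesis $\{(a, 0, \ldots, 0) : a \in \kbar^*\} \not\subset Z$ ensures $\widetilde U(k)$ is dense in $(\widetilde V \setminus Z)^{\sm}(k)$, upgrading these statements to central strong approximation for $\widetilde V \setminus Z$ via the independence of the choice of resolution. The main obstacle I expect is the Galois-equivariant Picard computation of (iii) and its translation into the explicit kernel formula of (iv) when $n = 4$, because the permutation module structure on the four boundary components must be traced carefully through the localization sequence and the Hochschild--Serre spectral sequence; once this is in place, (v) and (vi) follow as largely formal applications of the fibration Lemma~\ref{lem:SA-overAn} combined with classical strong approximation for smooth affine quadrics.
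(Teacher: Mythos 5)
Your treatment of (i)--(iv) is essentially the paper's own argument: the paper also uses the $x_1$-projection (whose fibers over $\Gm$ are affine quadrics of dimension $n-2$), Tsen's theorem plus the results of Colliot-Th\'el\`ene--Xu for $\Br(\widetilde U_{\kbar})=0$, and exactly your localization sequence comparing $\widetilde U$ with $\widetilde U_1=\widetilde U\cap\{x_1\neq 0\}$, with boundary components indexed by the components of $C_{\kbar}$ and the relation $\div(x_1)=\sum_i D_i$. One small slip: from that sequence $\Pic(\widetilde U_{\kbar})$ is the \emph{cokernel} of the diagonal map $\ZZ\to\ZZ[C(\kbar)]$, not the kernel of the sum map; these are different Galois modules in general (they have different $H^1$), and it is the cokernel description that yields the kernel formula in (iv).

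For (v) and (vi), however, your plan has a genuine gap. Lemma~\ref{lem:SA-overAn} requires the fibers over $k$-points of $W$ to satisfy strong approximation off $S=\{v_0\}$ \emph{without} any Brauer--Manin condition, and your fibers are smooth affine quadrics of dimension $n-2$. Such quadrics do not satisfy strong approximation off an arbitrary place $v_0$: by the Colliot-Th\'el\`ene--Xu theory they are homogeneous spaces of spin groups, and one needs the spin group to be non-compact at $v_0$ (i.e.\ the form isotropic at $v_0$), a hypothesis not available here since $v_0$ is arbitrary; moreover, for $n\leq 6$ the fibers have dimension $\leq 4$ and even under isotropy one only gets strong approximation \emph{with} Brauer--Manin obstruction, which the lemma does not accept (in its intended application the fibers are affine lines). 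Concretely, if $\Psi_2$ restricts to a ternary or quaternary form anisotropic at $v_0$, every fiber of your map fails strong approximation off $v_0$, yet the proposition is still asserted. The paper avoids this entirely: for $n=4$ it identifies $V$ with a non-Iskovskih singular del~Pezzo surface of degree~$4$ and invokes Theorem~\ref{thm:SA-general-type} (which ultimately reduces to $\mathbb A^1$-fibrations with affine-line fibers via Lemmas~\ref{lem:SA-U} and~\ref{lem: S-bl}); for $n\geq 5$ it argues by induction on $n$, blowing up along a Bertini-generic codimension-two linear section and applying the fibration method of Harpaz--Wei--Wittenberg \cite[Corollary 4.7]{HWW22} over the pencil $\Lambda\simeq\mathbb P^1$, the fibers being lower-dimensional instances of the same proposition. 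You would need to replace your fibration step by one of these mechanisms (or add isotropy hypotheses that the statement does not grant) for the argument to go through.
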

\begin{proof} 
	 Let $\pi: \widetilde V\to \mathbb A^1_k$ be defined by $x_1$-projection, $\pi$ is surjective and its generic fiber $\widetilde V_{\eta}$ is isomorphic to an affine quadric of dimension $n-2$ over the function field $K$ of $\mathbb A^1_k$. It implies that $\widetilde V$ is geometrically integral with the singular locus $\{(a,0,\cdots,0):a\in \kbar^*\}$, then we proved (i). 
	
	To prove (ii) and (iii), first we assume that $k$ is algebraic closed. By Tsen's theorem and \cite[5.3 and 5.6]{CTX09}, we have $\Br(\widetilde U_{\eta})=\Br(K)=0$, hence $\Br(\widetilde U)=0$. Suppose $n\geq 5$, any fiber of $\pi$ over a point of ${\widetilde U}$ is integral;  by \cite[Proposition 3.2]{CTnote} and \cite[5.3]{CTX09},   we have 
	$$ \kbar[\widetilde U]^*=\kbar^* \text{ and } \Pic(\widetilde U)=0. $$
	Suppose $n=4$. Let $\widetilde U_1$ be the open subset of $\widetilde U$  defined by $x_1\neq 0$. Any fiber of $\pi_{\widetilde U_1}$ is integral, by \cite[Proposition 3.2]{CTnote}  and \cite[5.3]{CTX09},   we have 
	$$ k[\widetilde U_1]^*/k^*\cong \ZZ \text{ and } \Pic(\widetilde U_1)=0. $$
	The inclusion $\widetilde U_1\subset \widetilde U$ derives the exact sequence 
	\begin{equation}\label{seq:exact-pic}
		0\to k[\widetilde U]^*/k^*\to \ZZ \to  \ZZ^4 \to \Pic(\widetilde U)\to\Pic(\widetilde U_1)=0,
	\end{equation}
	the third map is given by sending 1 to $(1,1,1,1)$. Therefore, we have 
	$$k[\widetilde U]^*=k^*\text{ and } \Pic(\widetilde U)\cong \ZZ^3.$$ 
	
	If $n\geq 5$, (ii) and (iii) imply that $\Br(U)=\Br_0(U)$. If $n=4$, the exact sequence (\ref{seq:exact-pic}) implies the case $n=4$ of (iv).

	 The projective variety defined by $\Psi_1=\Psi_2=0$ has the unique singular point $(1:0:\cdots:0)$. Suppose $n=4$, then $V$ is not an Iskovskih surface, 
	 the proof of case   (v) then follows from Theorem \ref{thm:SA-general-type}.  
	
	We shall now prove that the assertion of~(vi) in fact holds for all $n \geq 4$,
	by induction on~$n$. In view of~(v), this will establish~(vi).
	
	Let $n \geq 5$ be such that the assertion of~(vi)
	holds for smaller values of~$n$.
	In the subspace $\mathbb P^{n-2}_k$ (variables $(x_2:\cdots:x_n)$) in $\mathbb P^n_k$, by Bertini's theorem (\cite[Theorem II.8.18]{Hartshorne}), we may choose a codimension~$2$ projective linear subspace 
	$D$  such that $D\cap C$ is nonsingular and that $\widetilde D \cap Z$ has
	codimension~$\geq 1$ in~$Z$, where $\widetilde D$ is the affine linear subspace associated to $D$. In fact, $D\subset \mathbb P^n_k$ can be defined by $l_1(x_2,\cdots,x_n)=l_2(x_2,\cdots, x_n)=0$, where $l_1$ and $ l_2$ are two distinct linear forms in variables $(x_2,\cdots,x_n)$.
	Write $\Lambda$ for the projective line parametrising
	hyperplanes in~$\mathbb A^{n+1}_k$ containing the affine space $\widetilde D$ defined by $l_1=l_2=0$.
	
	Let $g:\widetilde V' \to \widetilde V$ be the blow-up of~$\widetilde V$ along
	$\widetilde D \cap \widetilde V$
	and $f:\widetilde V' \to \Lambda$ the natural morphism corresponding to the blow-up $g$.
	The fibers of~$f$ are the varieties
	$(\widetilde V'\setminus Z)\cap H$
	where $H$ ranges over the planes  of~$\mathbb A^{n+1}_k$ of dimension $n$ containing~$D$. Let  $Z'=g^{-1}(Z)$
	and $\widetilde U'=g^{-1}(\widetilde V^{\sm})\setminus Z'$, and $g^{-1}(\widetilde V^{\sm})$ is nonsingular (which is a blow-up of a nonsingular variety along a nonsingular center).
	The geometric generic fiber~$\widetilde U'_{\etabar}$
	of $f|_{\widetilde U'}:\widetilde U' \to \Lambda$
	is the smooth locus of a variety of the form (\ref{eq:general})
	(with~$n$ replaced by~$n-1$) by removing a closed subset of codimension $2$, in fact, the closed subset is the intersection of $Z$ with the generic plane defined by $\lambda l_1+\mu l_2=0, (\lambda:\mu)\in \Lambda$, hence it also has codimension $2$.
	In particular, by (ii) and (iii), $\widetilde U'_{\etabar}$ has no non-constant invertible function, and the abelian group
	$\Pic(\widetilde U'_{\etabar})$ is torsion-free,
	so that
	$H^1_{\text{ét}}(\widetilde U'_{\bar\eta},\QQ/\ZZ)=0$;
	and that $\Br(\widetilde U'_{\etabar})=0$. Since $\widetilde{U}'$ is nonsingular, 
	we can therefore apply  \cite[Corollary 4.7]{HWW22} to~$f|_{\widetilde U'}$ (recalling that $\Lambda \simeq \mathbb P^1_k$). Any quadratic form in the pencil $\lambda \Psi_1 + \mu \Psi_2$ ($\lambda,\mu \in \kbar $) has rank $\geq n$, 
	the parameters~$\pi_+$ which appear in the statement of Corollary 4.7
	satisfy $L_{m}=k(m)$ for all $m \in M$, so that Conjecture~$F_+$ holds for them
	by \cite[Corollary 6.2 (i)]{HWW22}.
	
	Since $\Br(\widetilde U)/ \Br_0(\widetilde U)$ is finite, we only need to show that arbitrary $(p_v)_v\in\widetilde U(\mathbb A_k)^{\Br(\widetilde U)}$ can be approximated by rational points of $\widetilde U$. By the implicit function theorem, we may assume that $(p_v)_v \in (\widetilde U\setminus \widetilde D)(\mathbb A_k)$. Recall $\widetilde{U}'=g^{-1}(\widetilde V^{\sm})\setminus Z'$. We may assume that $(p_v)_v\in \widetilde{U}'(\mathbb A_k)^{\Br(\widetilde U')}$ since $\Br(\widetilde U')=\Br(\widetilde U)$. 
	By \cite[Corollary 4.7]{HWW22},  $(p_v)_v$ can be approximated arbitrarily well
	by a point $(p'_v)_v\in \widetilde U'^{\sm}_c(\A_k)^{\Br(\widetilde U'^{\sm}_c)}$ for a rational point~$c$ of an arbitrary dense open
	subset of~$\Lambda$.  By the induction hypothesis, $(p'_v)_v \in \widetilde U'^{\sm}_c(\A_k)^{\Br(\widetilde U'^{\sm}_c)}$
	can in turn be approximated, for the adelic topology of $\widetilde V'_c$ (of course for the adelic topology of $\widetilde V'$) off~$v_0$, by a rational point of~$U'_c$. Hence $(p_v)_v$ can be approximated, for the adelic topology of $\widetilde V$ off~$v_0$, by a rational point. 
\end{proof}

\section{Proof of Main Theorem}

Let $V$ be a non-conical geometrically integral complete intersection of two quadrics in $\mathbb P^n$, $n\geq 5$.
Such an intersection $V$ is given by a system of equations
\begin{equation}\label{eq:intesection}
	\begin{cases}
		Q_1(x_0,\cdots, x_n)=0\\
		Q_2(x_0,\cdots, x_n)=0,
	\end{cases}	
\end{equation}
where $Q_1$ and $Q_2$ are two quadratic forms with coefficients in $k$.
Let $\widetilde V$ be the affine cone of $V$ in $\mathbb A^{n+1}_k\setminus (0,\cdots,0)$ defined by equations (\ref{eq:intesection}). 

For any $x \in V^{\sm}(\kbar)$, by a linear change of coordinates, we may assume $x=(1:0:\cdots:0)$, then $V$ can be defined by 
\begin{equation*}
	\begin{cases}
		x_0x_1+Q'_1(x_1,x_2,\cdots, x_n)=0\\
		x_0x_2+Q'_2(x_1,x_2,\cdots, x_n)=0.
	\end{cases}	
\end{equation*} 
Let $C(x)\subset \mathbb P^{n-3}$ be the variety defined by $$Q'_1(0,0,x_3,\cdots, x_n)=Q'_2(0,0,x_3,\cdots, x_n)=0.$$ 

\begin{thm}\label{thm:main}
	Let $k$ be a field of characteristic $0$.  Let $V\subset \mathbb P^n_k, n\geq 5$ be the pure geometrically integral intersection of two quadratics defined by quadratic forms  $Q_1$ and $Q_2$
	which is not a cone. Let $\widetilde V$ be the affine cone of $V$ and  $Z\subset \widetilde V^{\sm}$ is a closed subset of codimension at least $2$. Let $\widetilde U=\widetilde V^{\sm}\setminus Z$. 
	
	We assume: \begin{itemize} \item[a)] there exists a quadratic form of rank $n+1$ in the pencil $\lambda Q_1+\mu Q_2$ $(\lambda,\mu \in \kbar)$;
		\item[b)] $V^{\sm}(k)\neq \emptyset$; 
		\item[c)] $\{x\in V^{\sm}(\kbar): C(x)\text{ is a nonsingular complete intersection}\}\neq \emptyset$.
	\end{itemize}  
	Then 
	\begin{enumerate}[label=(\roman*)]
		\item If  $n\geq 6$, then $\Br(\widetilde U)=\Br_0(\widetilde U)$;
		if $n= 5$, then $\Br(\widetilde U)/\Br_0(\widetilde U)$ is finite.  
	\end{enumerate}
	
	Assume that $k$ is a number field, $v_0$ is a place of $k$.
	\begin{enumerate}[label=(\roman*)]
		\setcounter{enumi}{1}
		\item Suppose $n=5$, then $\widetilde U$ satisfies strong approximation with Brauer-Manin obstruction off $v_0$.
		\item Suppose $n\geq 6$, then $\widetilde U$ satisfies strong approximation off $v_0$.
	\end{enumerate}
\end{thm}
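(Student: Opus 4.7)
My approach is to induct on $n \geq 5$, using the fibration method of \cite[Corollary 4.7]{HWW22} to reduce at each step to a lower-dimensional intersection of the same coordinate shape, with base case $n = 5$ handled by Proposition~\ref{prop:genral}(v) via one more tangent section. First I would use~(b), (c), and Zariski density of $V^{\sm}(k)$ in $V^{\sm}$ (which follows from $k$-unirationality of $V$ via projection from a smooth $k$-point) to pick $x \in V^{\sm}(k)$ with $C(x)$ a smooth complete intersection. After a linear change of coordinates, $x = (1{:}0{:}\cdots{:}0)$ and $V = \{x_0 x_1 + Q'_1(x_1,\ldots,x_n) = 0,\ x_0 x_2 + Q'_2(x_1,\ldots,x_n) = 0\}$ as in the discussion preceding Theorem~\ref{thm:main}.

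For the fibration, I would choose two general linear forms $\ell_1, \ell_2 \in k[x_3,\ldots,x_n]$, let $\widetilde D = \{\ell_1 = \ell_2 = 0\} \subset \mathbb{A}^{n+1}$, and let $g:\widetilde V' \to \widetilde V$ be the blow-up of $\widetilde V^{\sm}$ along $\widetilde V \cap \widetilde D$ with natural morphism $f:\widetilde V' \to \Lambda \cong \mathbb{P}^1$. Crucially $\ell_1, \ell_2$ do not involve $x_0, x_1, x_2$, so $x \in \widetilde D$ and every hyperplane of the pencil contains $x$; hence each rational fiber $V \cap H \subset \mathbb{P}^{n-1}$ remains an intersection of two quadrics in the same coordinate shape and still has $x$ as a smooth $k$-point, the transversality amounting to $\ell_1$ not vanishing on $T_x V = \{v_1 = v_2 = 0\}$, which is a generic condition. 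A Bertini argument propagates (a) and (c) to the generic fiber: (a) because a rank $n+1$ quadric in the ambient pencil restricts to rank $\geq n$ on a general hyperplane, and (c) because the $C$-variety of the generic fiber is a linear section of the original $C(x)$, smooth for general $\ell_1, \ell_2$.

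The inductive step ($n \geq 6$) applies the induction hypothesis to each rational fiber to get strong approximation off $v_0$, and to the geometric generic fiber for the vanishing of $\Br$, $H^1_{\text{ét}}(\cdot, \mathbb{Q}/\mathbb{Z})$, and non-constant invertibles. Conjecture~$F_+$ for the pencil parameters then follows from the rank estimate via \cite[Corollary 6.2(i)]{HWW22}, and \cite[Corollary 4.7]{HWW22} propagates strong approximation from fibers to $\widetilde U$, exactly as in the proof of Proposition~\ref{prop:genral}(vi). For the base case $n = 5$, the rational fibers are now del Pezzo quartics with smooth $k$-point $x$; a further tangent section $\{x_1 = 0\}$ inside the fiber puts them in the shape of Proposition~\ref{prop:genral} with $n = 4$, so that case~(v) there yields strong approximation with algebraic Brauer-Manin obstruction. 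Statement~(i) is extracted in parallel: a Leray analysis on $f$ combined with Proposition~\ref{prop:genral}(iv) gives $\Br(\widetilde U) = \Br_0(\widetilde U)$ when $n \geq 6$ (fiber contribution vanishes) and finiteness of $\Br(\widetilde U)/\Br_0(\widetilde U)$ when $n = 5$.

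\textbf{Main obstacle.} The principal technical difficulty is the Bertini-style argument that the hyperplane pencil simultaneously preserves all of (a), (b), (c) — above all, smoothness of the $C$-variety for the smaller intersection — so that the inductive hypothesis genuinely applies to the generic fiber. A secondary subtlety is proving enough Zariski density of $V^{\sm}(k)$ in $V^{\sm}$ to upgrade (c) to a $k$-rational choice of the initial point $x$, which requires invoking $k$-unirationality of $V$ from a single smooth $k$-point.
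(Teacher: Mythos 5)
Your architecture is genuinely different from the paper's, and it breaks at the base case. The paper does \emph{not} induct on Theorem~\ref{thm:main} itself: it performs a single fibration by the \emph{tangent} pencil $sx_1+tx_2=0$ at the chosen point $p=(1:0:\cdots:0)$, i.e.\ by the hyperplanes containing $T_pV=\{x_1=x_2=0\}$. On the fiber over $(s:t)$ the combination $(s/t)Q_1+Q_2$ eliminates $x_0$, so the fiber takes the special shape (\ref{eq:general}) with $n$ replaced by $n-1$ (see (\ref{eq: fiber})), and all fibers are then handled by Proposition~\ref{prop:genral}, which carries its own induction down to $n=4$ where the singular del Pezzo machinery (Theorem~\ref{thm:SA-general-type}) applies. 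Your pencil through a \emph{general} codimension-$2$ linear space $\{\ell_1=\ell_2=0\}$ with $\ell_i\in k[x_3,\ldots,x_n]$ produces instead general hyperplane sections of $V$ through $p$. For $n\geq 6$ the inductive step is then at least self-consistent (modulo the Bertini verifications and the fact that the hypotheses of \cite[Corollary 4.7]{HWW22} on the geometric generic fiber --- $\kbar[\cdot]^*=\kbar^*$ and torsion-freeness of $\Pic$ --- are not part of the statement being inducted on, so they must be carried along separately; you also omit the analogue of Lemma~\ref{lem:point} controlling $\widetilde D\cap Z$). But the base case $n=5$ does not close.

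Concretely: for $n=5$ the fibers of your general pencil are punctured cones over del Pezzo quartic surfaces in $\mathbb P^4$ that are smooth at $p$ and, generically, smooth everywhere Bertini allows. Such a fiber is \emph{not} of the shape of Proposition~\ref{prop:genral} with $n=4$: that shape requires the second quadric to be independent of $x_0$, equivalently the pencil of the fiber must contain a form of rank $\leq 4$, equivalently the projective fiber must be singular at $(1:0:\cdots:0)$ (Proposition~\ref{prop:genral}(i)). A ``further tangent section $\{x_1=0\}$'' only cuts the $3$-fold fiber down to a surface; it cannot convert the fiber itself into the required $3$-fold shape, and no linear change of coordinates can, since both defining quadrics of a general hyperplane section genuinely involve $x_0$. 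Strong approximation for punctured cones over \emph{smooth} degree-$4$ del Pezzo surfaces is established nowhere in the paper (Theorem~\ref{thm:SA-general-type} treats only singular, non-Iskovskih ones) and is not known unconditionally, so your induction has nothing to bottom out on; the same problem infects statement~(i), since Proposition~\ref{prop:genral}(iv) is likewise unavailable for these fibers. The repair is exactly the paper's device: replace the general pencil by the tangent pencil $sx_1+tx_2=0$ (at least at $n=5$, and in fact for all $n$, which removes the need for the outer induction entirely), after using Lemma~\ref{lem:point} and the openness of condition c) to choose $p$ with $C(p)$ smooth and $T_p\cap Z$ of positive codimension in $Z$.
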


\begin{remark}\label{rem:main}\begin{enumerate}
		\item[1)]  If $n\geq 7$ and $\prod_v V^{\sm}(k_v)\neq \emptyset$,  then $V^{\sm}(k)\neq \emptyset$ by 
		\cite[Theorem 3.1]{CTSSD1}, \cite[Theorem 1.1]{HB18}, and \cite[Theorem 1]{Mol}. 
		\item[2)] If $n\geq 5$ and $V\subset \mathbb P^n_k$ is smooth, then $$\{x\in V^{\sm}(\kbar): C(x)\text{ is a nonsingular complete intersection}\}$$ is not empty (see \cite[Proposition 2.14 (f)]{CT23}, which asserts that for a smooth intersection of two quadrics, there exists a point whose associated residual intersection is also smooth).
		
		\item[3)] Condition $c)$ implies the rank of any quadratic form in the pencil $\lambda Q_1+\mu Q_2$ is at least~$n-1$. Indeed, for any quadratic form $\psi$ in the pencil, consider the restriction $\psi(x_0,0,0,x_3,\cdots,x_n)$, which is a quadratic form in the variables $x_0, x_3, \ldots, x_n$. By assumption $c)$, for some $x$, the intersection $C(x)$ defined by the restrictions of $Q_1$ and $Q_2$ is a nonsingular complete intersection, which means that for any nontrivial linear combination $\psi$, the restricted form has rank at least $n-3$. Since the original form $\psi$ depends also on $x_1$ and $x_2$, and the restriction only sets $x_1 = x_2 = 0$, the rank of $\psi$ must be at least $(n-3) + 2 = n-1$. Thus, every quadratic form in the pencil has rank at least $n-1$.
		
		\item[4)] Possibly the least rank in the family satisfying the conditions $a)-c)$ is $n-1$. For example, when $n=5$,
		$$
		\begin{cases}
			x_0x_1 + x_4^2 + x_5^2 = 0, \\
			x_0x_2 + x_1^2 + x_2^2 + x_3^2 + x_4^2 - x_5^2 = 0.
		\end{cases}
		$$
		Here, the least rank is $n-1=4$, which satisfies the conditions $a)-c)$.
	\end{enumerate}
\end{remark}

\begin{proof}
We begin by outlining the main strategy of the proof. The goal is to establish strong approximation (with or without Brauer-Manin obstruction, depending on $n$) for the smooth locus of the affine cone over a complete intersection of two quadrics, possibly with a closed subset of codimension at least $2$ removed. The proof proceeds by first ensuring the Zariski density of rational points, then selecting a suitable rational point to simplify the equations, and finally applying the fibration method and induction on the dimension, together with known results on the Brauer group and Picard group, to deduce the desired approximation property.

When $V^{\sm}(k) \neq \emptyset$, the set of rational points $V^{\sm}(k)$ is Zariski dense. In fact, if $n\geq 6$, by \cite[Theorem 3.11]{CTSSD1}, $V^{\sm}$ satisfies weak approximation. If $n=5$, the Brauer-Manin obstruction to weak approximation is the only one for any smooth projective model of $V$ by \cite[Theorem 1]{CTS92}. 

	For any $x\in V^{\sm}(\kbar)$, denote the tangent space at $x$ to $V$ by $T_{x}$. 
	\begin{lemma}\label{lem:point}
		There exists a non-empty open subset $U\subset V^{\sm}(\kbar)$ such that for any $x\in U$,  $T_{x,V}\cap Z\subset Z$ has codimension at least 1. 
	\end{lemma}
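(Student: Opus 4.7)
The plan is to treat $Z$ component by component. For each irreducible component $Z_i$ of $Z$, consider the ``bad locus'' $B_i := \{x \in V^{\sm}(\kbar) : Z_i \subset T_{x,V}\}$, and show that it is a proper closed subset of $V^{\sm}$. The complement of $\bigcup_i B_i$ (a finite union, since $Z$ has only finitely many components) will then be the desired open set $U$, because the condition ``$T_{x,V}\cap Z$ has codimension at least $1$ in $Z$'' is precisely the condition that $Z_i \not\subset T_{x,V}$ for each $i$. Closedness of $B_i$ in $V^{\sm}$ is immediate: writing $Q_j(\cdot,\cdot)$ for the symmetric bilinear form associated to $Q_j$, membership $z \in T_{x,V}$ (after lifting $x$ to a point of $\widetilde V$) is cut out by the two linear-in-$x$ equations $Q_1(x,z)=0$ and $Q_2(x,z)=0$, so $B_i$ is an intersection over $z \in Z_i(\kbar)$ of closed conditions on $x$.

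The heart of the lemma is the assertion $B_i \ne V^{\sm}$, which I would prove by contradiction. Suppose $B_i = V^{\sm}$; then for every fixed $z \in Z_i$ the two linear forms $x \mapsto Q_j(x,z)$ ($j=1,2$) vanish identically on $V^{\sm}$, hence on $V$ by density. Since $V$ is a pure geometrically integral codimension-$2$ complete intersection in $\mathbb P^n$ whose homogeneous ideal is generated by $Q_1$ and $Q_2$, the variety $V$ is non-degenerate in $\mathbb P^n$, so no nonzero linear form can vanish identically on $V$. Consequently $Q_j(\cdot,z) \equiv 0$ as a linear form for $j=1,2$, i.e.\ $z$ lies in the common radical of the two symmetric bilinear forms $Q_1$ and $Q_2$.

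Finally, hypothesis a) of Theorem~\ref{thm:main} furnishes a form of rank $n+1$ in the pencil $\lambda Q_1 + \mu Q_2$, and such a form has trivial radical, so the common radical of $Q_1$ and $Q_2$ is zero. This forces $z=0$, contradicting $Z \subset \widetilde V \subset \mathbb A^{n+1}\setminus\{0\}$. Hence every $B_i$ is proper, and $U := V^{\sm}\setminus \bigcup_i B_i$ is the desired non-empty Zariski open subset. The only subtle point is the verification that $V$ is non-degenerate in $\mathbb P^n$, which is what allows the passage from ``$Q_j(\cdot,z)$ vanishes on $V$'' to ``$Q_j(\cdot,z)$ is the zero linear form''; once that is in hand, condition a) of the theorem does all the real work.
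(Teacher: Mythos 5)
Your argument is correct and follows essentially the same route as the paper: both reduce to the observation that, for a fixed nonzero $z$ in a component of $Z$, the polar conditions $Q_1(x,z)=Q_2(x,z)=0$ cannot cut out all of $V$, using the non-degeneracy of $V$ in $\mathbb P^n$. The only cosmetic difference is that you exclude $z$ from the common radical of the pencil via hypothesis a) (a rank-$(n+1)$ form), whereas the paper invokes the non-conical hypothesis directly; both are available in the context of Theorem~\ref{thm:main} and each suffices.
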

	\begin{proof} We choose a point on each irreducible components of $Z$, then we get a finite subset $\widetilde M\subset Z$. Let $M$ be the image of $\widetilde M$ in $V$. Let $U:=\{x \in V^{\sm}(\kbar)\mid T_{x,V} \cap M=\emptyset\}$ be the open subset. To show that $U$ is nonempty, it is enough to show, for any given $m\in M$,  it is impossible that $m\in T_{x,V}$ for any $x \in V^{\sm}(\kbar)$.  
		
		For any point $x\in V^{\sm}(\kbar)$, we assume that the tangent space $ T_{x,V}$ contains~$m$. Since $V$ is non-conical, there exists a quadric $Q\supset V$ such that $m$ is not a vertex of $Q$. Then the points $x\in V^{\sm}(\kbar)$ such that $m\in T_{x,V}$ are contained in a hyperplane $H\subset \mathbb P^n$ which is the space orthogonal to $m$ with respect to the quadratic form defining $Q$. In particular, $V$ is contained in $H$ and this contradicts \cite[Lemma 1.3]{CTSSD1} which ensures that $V(\kbar)$ generates $\mathbb P^n$. 
	\end{proof}
	
	For any point $x\in V(\bar k)$, $C(x)$ is nonsingular complete intersection if and only if the determinant of the pencil of $C(x)$ is of degree $n-2$ and just has $n-2$ distinct roots by \cite[Proposition 2.1]{Rei72}, which is an open condition. Since $V^{\sm}(k)$ is Zariski dense and condition $c)$ is an open condition, we may choose a rational point $p\in V^{\sm}(k)$, such that  $C(p)$ is a nonsingular complete intersection and   $T_p\cap Z\subset Z$  has codimension at least $1$ by Lemma \ref{lem:point}. After a linear change of coordinates and a replacement of $Q_1$ and  $Q_2$ by suitable linear combinations, we may assume $p= (1: 0: \cdots: 0)$ and
	\begin{equation}\label{eq: V}\begin{cases}
			Q_1 = x_0x_1 + Q'_1(x_1,\cdots,x_n)\\
			Q_2 = x_0x_2 + Q'_2(x_1,\cdots, x_n),
		\end{cases}
	\end{equation}
	where $Q'_1$ and $Q'_2$ are quadratic forms in $(x_1,\cdots,x_n)$. Therefore, the affine cone $\widetilde V\subset \mathbb {A}^{n+1}\setminus {(0,\cdots,0)}$ is defined by $Q_1=Q_2=0$ with $Q_1,Q_2$  in (\ref{eq: V}).
	
	Let $D$ be the closed subset of $\widetilde V$ defined by $x_1=x_2=0$, in fact, $D$ is the cone over $C(p)$  with vertex $p$, and $D\subset \widetilde V^{\sm}$ by condition $c)$. Let $g: \widetilde V'\to \widetilde V$ be the blow-up of $\widetilde V$ along~$D$. Then  $\widetilde V'$ is defined by 
	\begin{equation*}
		\begin{cases}
			&x_0x_1+Q'_1(x_1,\cdots,x_n)=0\\
			&x_0x_2+Q'_2(x_1,\cdots, x_n)=0\\
			&sx_1+tx_2=0,
		\end{cases}
	\end{equation*}
	where $[s:t]\in \mathbb P^1$. 
	Let $f: \widetilde V'\to \mathbb P^1$ be the projection morphism $(x_0,\cdots,x_n;[s:t])\mapsto [s:t]$. Let  $Z'=g^{-1}(Z)$
	and $\widetilde U'=g^{-1}(\widetilde V^{\sm})\setminus Z'$.
	As~$Z'\cap \widetilde U'$ has codimension~$\geq 2$ in~$\widetilde U'$, the generic fiber ~$\widetilde U'_{\etabar}$ of $f|_{\widetilde U'}:\widetilde U' \to \mathbb P^1_k$ is the affine variety removing a closed subset of codimension $\geq 2$ defined by   
	\begin{equation}\label{eq: fiber}
		\begin{cases}
			&x_0x_1+Q'_1(x_1,-(s/t) x_1,\cdots,x_n)=0\\
			&(s/t) Q'_1(x_1,-(s/t) x_1,\cdots,x_n)+ Q'_2(x_1,-(s/t) x_1,\cdots, x_n)=0,
		\end{cases}
	\end{equation} which has the form (\ref{eq:general}) (here we replaced $n$ by~$n-1$). 
	
	
	Let $H:=\{(a,0,\cdots,0):a\in \kbar\}$  be the singular locus of $\widetilde V$ of codimension $\geq 2$. Thus $D\setminus H$ is nonsingular and so is $g^{-1}(\widetilde V^\sm\setminus H)$. In fact, $g^{-1}(\widetilde U\setminus H)$ is  the smooth locus $\widetilde U'^\sm$ of $\widetilde U'$.  By \cite[Proposition 3.7.10]{ctskbook}, $\Br(\widetilde U'^\sm)=\Br(\widetilde U\setminus H)=\Br(\widetilde U^{\sm})$. 
	
	First we  assume that $k$ is algebraic closed. By Tsen's theorem, $\Br(k(\eta))=0$. If $n\geq 6$, by Proposition \ref{prop:genral} $(iv)$, we have $\Br(\widetilde U'^{\sm}_{\eta})=0$, hence $\Br(\widetilde U'^{\sm}_{\kbar})=0$. If $n=5$, the $C(\kbar)$ in Proposition \ref{prop:genral} $(iv)$ is the union of $4$ distinct points, 
	then $\Br(\widetilde U'^{\sm}_{\eta})$ is finite, hence $\Br(\widetilde U'^{\sm}_{\kbar})$ is finite.
	
	Suppose $n\geq 6$.  In the pencil of (\ref{eq: fiber}), there exists a form of rank $n\geq 6$ by condition a),  any form has rank $n-3\geq 3$ by Remark \ref{rem:main} 3), therefore all fibers are geometrically integral by \cite[Lemma 1.11]{CTSSD1}.  By \cite[Proposition 3.2]{CTnote} and Proposition \ref{prop:genral} $(ii)+(iii)$,   we have 
	$$ \kbar[\widetilde U'^{\sm}]^*=\kbar^* \text{ and } \Pic(\widetilde U'^{\sm}_\kbar)=\mathbb Z. $$
	By Hochschild-Serre's spectral sequence, we have  $\Br(\widetilde U'^\sm)=\Br_1(\widetilde U'^\sm)=0$.
	Suppose $n=5$.  By Remark \ref{rem:main} $3)$, any form in the pencil of (\ref{eq: fiber}) has rank $\geq2$ and there exists a form of rank $5$ by condition a), so there are at most $3$ fibers which are not geometrically integral of multiplicity $1$ by \cite[Lemma 1.10 and 1.11]{CTSSD1}. By \cite[Proposition 3.2]{CTnote} and Proposition \ref{prop:genral} $(ii)+(iii)$,  we have 
	$$ \kbar[\widetilde U'^{\sm}]^*=\kbar^* \text{ and } \Pic(\widetilde U'^{\sm}_\kbar) \text{ is finitely generated  and torsion-free}.$$
	Therefore, $\Br(\widetilde U'^{\sm})/\Br_0(\widetilde U')$ is finite.
	
	We shall now prove that the assertion of~$(ii)$ in fact holds for all $n \geq 4$,
	by induction on~$n$. In view of~$(i)$, this will establish~$(iii)$.
	
	By Proposition \ref{prop:genral} $(ii)+(iii)$,  the generic fiber ~$\widetilde U'^{\sm}_{\etabar}$ has no non-constant invertible function, and the abelian group
	$\Pic(\widetilde U'^{\sm}_{\etabar})$ is torsion-free,
	so that
	$H^1_{\text{ét}}(\widetilde U'^{\sm}_{\bar\eta},\QQ/\ZZ)=0$;
	and that $\Br(\widetilde U'^{\sm}_{\etabar})=0$. 
	By the above discussion, if $n\geq 6$,  all fibers are geometrically integral; if $n=5$, there are at most $3$ fibers which are non-split and any non-split fiber splits by a quadratic extension. By \cite[Corollary 6.2 (i)+(ii)]{HWW22}, we can apply \cite[Corollary 4.7]{HWW22}.
	For any $(p_v)_v\in\widetilde U(\mathbb A_k)^{\Br(\widetilde U)}$, by the implicit function theorem, we may assume that $(p_v)_v \in (\widetilde U\setminus \widetilde D)(\mathbb A_k)$. Recall $\widetilde{U}'=g^{-1}(\widetilde V^{\sm})\setminus Z'$. Then we may assume that $(p_v)_v\in \widetilde{U}'^{\sm}(\mathbb A_k)^{\Br(\widetilde U'^{\sm})}$ by Harari's formal lemma. 
	By \cite[Corollary 4.7]{HWW22},  $(p_v)_v$ can be approximated arbitrarily well
	by a point $(p'_v)_v\in \widetilde U'^{\sm}_c(\A_k)^{\Br(\widetilde U'^{\sm}_c)}$ for a rational point~$c$ of an arbitrary dense open
	subset of~$\mathbb P^1$.  By Proposition \ref{prop:genral}, $(p'_v)_v \in \widetilde U'^{\sm}_c(\A_k)^{\Br(\widetilde U'^{\sm}_c)}$
	can be approximated, 
	for the adelic topology of $\widetilde U'_c$ off~$v_0$, by a rational point of~$U'_c$. Hence $(p_v)_v$ can be approximated, for the adelic topology of $\widetilde V^{\sm}$ off~$v_0$, by a rational point.
\end{proof}

Now we prove the main theorem of the paper. It suffices to show that the three conditions in Theorem \ref{thm:main} hold on $V$. The following two lemmas will be used to verify conditions a) and c).

\begin{lemma}\label{lem:det} 
	Let $k$ be a field of characteristic $0$.
	Let $V\subset \mathbb P^n_k$ be a pure geometrically integral intersection of two quadrics $Q_1=Q_2=0$ which is not a cone. If the homogeneous
	polynomial $P(\lambda, \mu ) = \det (\lambda Q_1 +\mu Q_1 )$ vanishes identically, then  $V$ has infinitely many  singular $k$-rational points.
\end{lemma}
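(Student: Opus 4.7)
The plan is to construct an infinite family of singular $k$-rational points of $V$ parametrised by $\mathbb P^{1}_k$, coming from the kernel of the pencil of symmetric matrices. Let $A_1,A_2$ be the symmetric $(n+1)\times(n+1)$ matrices of $Q_1,Q_2$. Since $P(\lambda,\mu)\equiv 0$, the matrix pencil $\lambda A_1+\mu A_2$ is singular at every $[\lambda:\mu]\in\mathbb P^1$, so its kernel over $k(\lambda,\mu)$ is nontrivial. Picking a homogeneous element of this kernel and clearing common factors, I obtain a primitive homogeneous polynomial vector $v(\lambda,\mu)\in k[\lambda,\mu]^{n+1}\setminus\{0\}$ satisfying $(\lambda A_1+\mu A_2)\,v(\lambda,\mu)=0$ as a polynomial identity. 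Because $v$ is primitive its coordinates have no common zero in $\mathbb P^{1}_{\kbar}$, so $v$ defines an honest $k$-morphism $\Phi\colon\mathbb P^{1}_k\to\mathbb P^{n}_k$.

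I next verify that $\Phi$ is non-constant. If $v(\lambda,\mu)=f(\lambda,\mu)\cdot v_0$ for some constant vector $v_0\in k^{n+1}\setminus\{0\}$, then $A_1 v_0=A_2 v_0=0$, so $v_0$ would be a nonzero element of $\ker A_1\cap\ker A_2$. Since $V$ is not a cone, \cite[Lemma 1.3]{CTSSD1} ensures $V(\kbar)$ spans $\mathbb P^{n}_{\kbar}$, and a standard calculation (using $Q_i(u+tp)=0$ for $p$ a vertex and $u\in V$ together with the symmetry of $A_i$) identifies $\ker A_1\cap\ker A_2$ with the affine cone over the projective vertex of $V$; as this vertex is empty, $\ker A_1\cap\ker A_2=0$, a contradiction. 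Hence $\Phi$ is non-constant and its image $C:=\Phi(\mathbb P^{1}_k)$ is a rational curve in $\mathbb P^{n}_k$ defined over $k$.

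The central step will be to prove $C\subset V$, i.e.\ that $Q_1(v(\lambda,\mu))=Q_2(v(\lambda,\mu))=0$ in $k[\lambda,\mu]$. Multiplying $(\lambda A_1+\mu A_2)v=0$ on the left by $v^{T}$ yields $\lambda Q_1(v)+\mu Q_2(v)=0$, so there exists $h\in k[\lambda,\mu]$ with $Q_1(v)=\mu h$ and $Q_2(v)=-\lambda h$; the task reduces to proving $h\equiv 0$. For this I will invoke the Kronecker--Weierstrass canonical form for pencils of symmetric bilinear forms over $\kbar$: the pair $(A_1,A_2)$ decomposes as an orthogonal direct sum of ``regular'' blocks (on which the discriminant does not vanish identically) and ``singular'' Kronecker blocks of odd size $2\epsilon+1\geq 3$. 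The hypothesis $P\equiv 0$ together with $\ker A_1\cap\ker A_2=0$ forces at least one singular block to appear. At a generic $[\lambda:\mu]$ the kernel of $\lambda A_1+\mu A_2$ is concentrated in the subspace spanned by these singular blocks, since the regular parts have trivial kernel generically, and a direct computation in the canonical form of a singular block (where the kernel vector has zero components on the ``left'' half of the block) shows that $Q_1$ and $Q_2$ vanish on every kernel vector supported there. Therefore $Q_i(v(\lambda,\mu))$ vanishes at a general $\kbar$-point of $\mathbb P^{1}$, and hence identically as a polynomial.

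Once $C\subset V$ is established the conclusion is immediate. For every $[\lambda:\mu]\in\mathbb P^{1}(k)$, the identity $(\lambda A_1+\mu A_2)\,v(\lambda,\mu)=0$ says that the gradients $A_1 v$ and $A_2 v$ of the two defining quadrics are linearly dependent at $\Phi([\lambda:\mu])$, so $\Phi([\lambda:\mu])$ is a singular point of $V$. As $k$ has characteristic zero, $\mathbb P^{1}(k)$ is infinite, and since $\Phi$ is a non-constant morphism its image $C$ contains infinitely many $k$-rational points---all of them singular on $V$. The main obstacle in the plan is the vanishing $h\equiv 0$; this rests on the Kronecker--Weierstrass classification of singular pencils of symmetric bilinear forms, the rest being essentially linear-algebra bookkeeping and the identification of $\ker A_1\cap\ker A_2$ with the vertex of $V$.
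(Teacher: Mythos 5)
Your argument is correct, but it is genuinely different from the one in the paper. The paper works pointwise: for each $[\lambda:\mu]\in\mathbb P^1(k)$ it diagonalises $\lambda Q_1+\mu Q_2$ over $k$, observes that the vanishing of the corresponding coefficient of $P$ forces the complementary block of the other form to be degenerate, and from a $k$-kernel vector of that block manufactures a $k$-point of $V$ that is a vertex of the quadric $\lambda Q_1+\mu Q_2=0$ (hence singular on $V$); pairwise distinctness of these points follows because a common vertex of two members of the pencil would be a vertex of $V$. You instead make one global construction, a non-constant $k$-morphism $\Phi\colon\mathbb P^1\to V^{\mathrm{sing}}$ coming from the generic kernel of the pencil. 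Both routes are valid; yours produces the stronger statement that the singular points sweep out a rational curve, while the paper's is more elementary (pure matrix manipulation over $k$, no canonical forms). Two remarks on your write-up. First, the step you single out as the main obstacle, $h\equiv 0$, does not need the Kronecker--Weierstrass classification at all: differentiate the identity $(\lambda A_1+\mu A_2)v=0$ with respect to $\lambda$ to get $A_1v+(\lambda A_1+\mu A_2)\partial_\lambda v=0$, then left-multiply by $v^{T}$; by symmetry $v^{T}(\lambda A_1+\mu A_2)=0$, so $v^{T}A_1v=Q_1(v)\equiv 0$, and likewise for $Q_2$. (Your Kronecker argument is nonetheless correct: with common kernel excluded all singular blocks have $\epsilon\geq 1$, the generic kernel sits in the totally isotropic half of each such block, and the blocks are orthogonal for both forms, so the cross terms vanish.) Second, in showing $\ker A_1\cap\ker A_2=0$ you do not need the spanning lemma from \cite{CTSSD1}: a nonzero $v_0$ with $A_1v_0=A_2v_0=0$ satisfies $Q_i(su+tv_0)=s^2Q_i(u)$ for all $u$, so $[v_0]$ is directly a vertex of $V$, contradicting the non-cone hypothesis.
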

\begin{proof}
By changing coordinates, we may assume that $Q_1=\sum_{i=0}^r a_i x_i^2$  with $a_i \in k^*$ and
	$ 1\leq r <n $. Let $[b_{ij} ]$ be the matrix of $Q_2$. The coefficient of $\lambda^{r}\mu^{n-r}$ of $P(\lambda, \mu )$ is $a_0\cdots a_r \det(B)$, where $B$ is the $(n-r, n-r)$-matrix with entries
	$(b_{ij})_{r+1\leq i,j\leq n}$. Thus $\det B=0$, and there exists a non-zero vector $z=(c_{r+1},\cdots, c_n)$
	with coordinates in $k$ such that $B z^t=0$. The point $z=[0:\cdots:0: c_{r+1}:\cdots:c_n]\in V(k)$ and it is clearly conical on the quadric $Q_1=0$, hence it is a singular $k$-point. By replacing $Q_1$ by any quadric $\lambda Q_1+\mu Q_2$ and repeating the above discussion, we get a another singular $k$-point $z_{[\lambda:\mu]}$ which is conical on the quadric $\lambda Q_1+\mu Q_2=0$. The proof now follows from that $z_{[\lambda:\mu]}$ are pairwise distinct, otherwise some $z_{[\lambda:\mu]}$ is conical on $V$ which contradicts to that $V$ is not a cone. 
\end{proof}

\begin{lemma}\label{lem:fano-nonsingular} 	Let $k$ be a field of characteristic $0$.
	Let $V\subset \mathbb P^n_k, n\geq 5$ be a pure geometrically integral intersection of two quadrics which is not a cone. Suppose that $V_\kbar$ only has finite singular points. Then, 
	there exists a nonempty open subset $U\subset V^{\sm}$ such that, for any $x\in U(\kbar)$, $C(x)$ is a nonsingular complete intersection, $i.e.$, condition $c)$ of Theorem \ref{thm:main} holds on $V$.
\end{lemma}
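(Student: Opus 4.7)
My plan is to show that the set
\[
U := \{x \in V^{\sm}(\kbar) : C(x) \text{ is a nonsingular complete intersection}\}
\]
is a nonempty Zariski-open subset of $V^{\sm}$, which directly gives condition c) of Theorem~\ref{thm:main}. By \cite[Proposition 2.1]{Rei72}, $C(x) \subset \mathbb P^{n-3}$ is a nonsingular complete intersection if and only if the binary form $\det(\lambda A(x) + \mu B(x))$ is of degree $n-2$ with pairwise distinct roots, where $A(x), B(x)$ are the Gram matrices of $\tilde Q_1'$ and $\tilde Q_2'$ in normalized coordinates with $x = (1:0:\cdots:0)$. Since $A$ and $B$ depend algebraically on $x$, both the nonvanishing of the top-degree coefficient and the nonvanishing of the discriminant of this form define Zariski-open conditions on $V^{\sm}$. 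Moreover, as $V$ is geometrically integral of dimension $n-2 \geq 3$ with only finitely many singular points, $V^{\sm}$ is geometrically irreducible, so $U$ is either empty or dense.

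It therefore suffices to exhibit a single point $x_0 \in V^{\sm}(\kbar)$ with $C(x_0)$ a smooth complete intersection. For this I would introduce the incidence variety
\[
\mathcal F := \{(x, L) \in V^{\sm} \times \mathrm{Grass}(2, n+1) : x \in L \subset V\},
\]
together with its first projection $p: \mathcal F \to V^{\sm}$, whose scheme-theoretic fiber over $x$ is canonically identified with $C(x)$. In characteristic zero, generic smoothness applied to $p$ guarantees that once $\mathcal F$ is shown to be generically reduced of dimension $2n-7$ and $p$ is dominant with generic fiber of the expected dimension $n-5$, the generic fiber of $p$ is smooth. Any smooth scheme of dimension $n-5$ cut out by two quadratic forms in $\mathbb P^{n-3}$ is automatically a complete intersection, so this would furnish the desired $x_0$.

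The main obstacle is the verification that $\dim \mathcal F = 2n-7$ with $p$ generically smooth, which is where all three hypotheses enter crucially. By Lemma~\ref{lem:det}, the finiteness of the singular locus forces $\det(\lambda Q_1 + \mu Q_2) \not\equiv 0$, so the pencil contains a smooth quadric $Q_0 \subset \mathbb P^n$; on $Q_0$ the Fano scheme of lines is a smooth classical variety of the expected dimension, and imposing the further constraint that a line also lies on another quadric in the pencil cuts down by the expected codimension via a Bertini-type argument. The non-cone assumption excludes the pathological case in which $\tilde Q_1'(x)$ and $\tilde Q_2'(x)$ share a common linear factor for every $x$, which would force $V$ to contain a positive-dimensional family of $(n-3)$-dimensional linear subspaces. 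Restricting throughout to $V^{\sm}$ allows us to bypass the finitely many singular points where these dimension counts could jump. Assembling these ingredients yields a generically smooth projection $p$ of the correct relative dimension, and hence the existence of $x_0$.
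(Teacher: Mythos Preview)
Your overall architecture—openness via the discriminant condition from \cite[Proposition 2.1]{Rei72}, then nonemptiness via the incidence variety $\mathcal F$ of pointed lines and generic smoothness in characteristic~$0$—is exactly the paper's strategy. The openness argument is fine and is in fact invoked verbatim in the proof of Theorem~\ref{thm:main}. The difference lies in how the nonemptiness (equivalently, the dimension and generic smoothness of $p:\mathcal F\to V^{\sm}$) is established.

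The paper does not attempt a direct dimension count on $\mathcal F$. Instead it first passes to the open set $U_0\subset V^{\sm}$ of points whose tangent space avoids all singular points of $V$ (nonempty since $V$ is not a cone and the singular set is finite). For $x\in U_0$ every line through $x$ on $V$ lies in $V^{\sm}$, and $C(x)\neq\emptyset$ because two quadrics in $\mathbb P^{n-3}$ always meet when $n\ge 5$; hence the evaluation map from the space of lines on $V^{\sm}$ is dominant. One then quotes \cite[Theorem~II.3.11]{Kollar}: lines through a general point of $U_0$ are free on $V^{\sm}$. Freeness gives $H^1(N_{L/V}(-1))=0$ and $h^0(N_{L/V}(-1))=n-5$, so $C(x)$ is smooth of the expected dimension $n-5$, hence a complete intersection. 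This bypasses any global dimension computation for $\mathcal F$.

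Your route via a smooth quadric $Q_0$ in the pencil and a ``Bertini-type argument'' has a genuine gap at exactly this point. Bertini controls the intersection with a \emph{generic} member of a linear system; here the second quadric is fixed. The three conditions cutting $F(V)$ out of the Fano variety $F(Q_0)$ are the vanishing of a specific section of a rank-$3$ bundle, and nothing you have said rules out excess-dimensional components. Likewise, your invocation of the non-cone hypothesis (to exclude a family of large linear subspaces) is suggestive but not an argument. Finally, generic smoothness of $p$ requires working on the smooth locus of $\mathcal F$, and you have not explained why lines in $\mathcal F$ avoid the singular points of $V$; the paper's passage to $U_0$ is precisely what handles this. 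The free-curve argument from Koll\'ar packages all of these issues at once, and is what you should use in place of the Bertini sketch.
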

\begin{proof} It suffices to prove the statement over $\kbar$.  
	For any singular point $p\in V^{\text{sing}}(\kbar)$, let $T(p):=\{x\in V^{\sm}(\kbar)\mid p\in T_{x,V}\}$ be the closed subset of $V$. Since $V$ is not a cone, the open subset $U_0:=V \setminus  \cup_{p\in V^{\text{sing}}(\kbar)}T(p)$ is nonempty. 
	
	For any smooth point $x\in U_0$, since $\dim(T_{x,V}\cap V)\geq n-4\geq 1$  by \cite[Theorem~7.2]{Hartshorne},  there is a line on $V_{sm}$ passing through $x$. In particular, the evaluation morphism of Hom scheme of lines $$ev:\mathbb{P}^1\times \Hom(\mathbb{P}^1,V)\to V$$ is dominant. Since Hom scheme of lines is of finite type, by the same argument as in \cite[Theorem II.3.11]{Kollar}, every line through a general point on $U_0$ is free on $V^{sm}$. 
	In particular, the Fano scheme of lines $C(x)$ passing through a general point $x$ is smooth. 
\end{proof}

\begin{proof}[Proof of Theorem \ref{thm:main'}]

	By Lemma \ref{lem:det} and \ref{lem:fano-nonsingular}, conditions (a)+(c) in Theorem \ref{thm:main} hold on $V$.  The proof of case (i) and (ii) then follows from Theorem \ref{thm:main}. For case (iii), we may assume $\widetilde U(\A_k)\neq \emptyset$, by the natural projection, we have  $V^{\sm}(\A_k)\neq \emptyset$. Therefore $V^{\sm}(k)\neq \emptyset$ by 
	\cite[Theorem 3.1]{CTSSD1}, \cite[Theorem 1.1]{HB18}, and \cite[Theorem 1]{Mol} and the proof follows.
\end{proof}

\begin{remark}
	When $n=5 \text{ or } 6$, we may expect that $V^\sm$ satisfies weak approximation with Brauer-Manin obstruction, $i.e.$,  the condition $V^\sm(k)\neq \emptyset$ should be removed (similarly as $n\geq 7$). However, this assertion is still open.
\end{remark}




\bibliography{myref-2}	
\bibliographystyle{alpha}	
\end{document}

_